\newtheorem{theorem}{Theorem}[section]
\newtheorem{definition}{Definition}[section]
\theoremstyle{definition}%
\newtheorem{remark}[theorem]{Remark}
\newtheorem{example}[theorem]{Example}
\newtheorem{lemma}[theorem]{Lemma}
\newtheorem{corollary}[theorem]{Corollary}
\let\originalleft\left
\let\originalright\right
\renewcommand{\left}{\mathopen{}\mathclose\bgroup\originalleft}
\renewcommand{\right}{\aftergroup\egroup\originalright}
\newcommand{\Addresses}{{% additional braces for segregating \footnotesize
		\footnote{
			%	\footnotesize

			\noindent	 \textsuperscript{1,2} Department of Mathematics, Indian Institute of Technology Roorkee, Roorkee, 247667, India.	
			
			\noindent  \textit{e-mail\textsuperscript{1}:} \texttt{p\_yadav@ma.iitr.ac.in}
			
			\noindent  \textit{e-mail\textsuperscript{2}:} \texttt{tanuja.srivastava@ma.iitr.ac.in}

			%			\noindent \textsuperscript{2}Department of Mathematics, Indian Institute of Technology Roorkee, Roorkee, 247667, India.\par\nopagebreak
			%			\noindent  \textit{e-mail\textsuperscript{2}:} \texttt{manilfma@iitr.ac.in, maniltmohan@gmail.com.}

}}}
\begin{document}
	\title[]{Maximum likelihood estimation in the multivariate and matrix variate symmetric Laplace distributions through group actions \Addresses}
	\author [Pooja Yadav.  Tanuja Srivastava]{Pooja Yadav\textsuperscript{1}.  Tanuja Srivastava\textsuperscript{2}}

	\begin{abstract}
	In this paper, we study the maximum likelihood estimation of the parameters of the multivariate and matrix variate symmetric Laplace distributions through group actions. The multivariate and matrix variate symmetric Laplace distributions are not in the exponential family of distributions. We relate the maximum likelihood estimation problems of these distributions to norm minimization over a group and build a correspondence between stability of data with respect to the group action and the properties of the likelihood function.
	\end{abstract}
	\maketitle
	
	\section{Introduction}\label{sec1}  
		The main challenge in statistics is to find a statistical model that best fit for the observed data. Maximum likelihood estimation is a fundamental approach that gives most likely the value of the parameter in the assumed statistical distribution given the observed data \cite{Aldrich}. The value of the parameter that maximizes the likelihood (log-likelihood) function is called the maximum likelihood estimator (MLE) of the parameter. There are many algorithms for computing the MLEs (see e.g., \cite{GRL}, \cite{myung}, \cite{MK}), but there is a growing interest in understanding the conditions under which an MLE exists and whether it is unique. In this paper, we will discuss such types of questions for the Laplace distribution using the concept of group actions.
	
	The Laplace distribution has sharp peaks at the location parameter and heavy tails. The Laplace distribution has many real-world applications in signal processing, financial data, biological sciences and engineering sciences \cite{FM}, \cite{KKP} and \cite{KP}. In general, the normal distribution is used to fit the empirical data, but in some cases, the Laplace distribution provides better fits for the empirical data than the normal distribution, such as real financial data, where these types of data usually have sharp peaks and heavy tails. The multivariate and matrix variate symmetric Laplace distributions are extensions of the univariate Laplace distribution to the multivariate and matrix variate settings, respectively. In the symmetric Laplace distribution, the location parameter is always assumed to be zero. Estimation of the parameters of the multivariate Laplace distribution is studied by using the other method in \cite{EKL}, \cite{KS} and \cite{Vi}. The maximum likelihood estimation of the parameters of the multivariate and matrix variate symmetric Laplace distributions are studied in \cite{YS}. For more details about these distributions, see \cite{KKP}, \cite{KMP}, \cite{YS} and \cite{Y}. Here, we will talk about the existence and uniqueness of maximum likelihood estimators of the multivariate and matrix variate symmetric Laplace distributions through group actions.
	
	Améndola et al. \cite{amendola2021invariant} discovered the connection between the maximum likelihood estimation and norm minimization over a group orbit for Gaussian models, which are continuous statistical models. Also, they have connected the maximum likelihood estimation and norm minimization for the log-linear model, a discrete statistical model in \cite{AKRS}. Both these statistical models belong to the exponential family of distributions. The study of the group action on a set or space focuses on identifying the properties that remain unchanged. The notions of stability are studied using the minimum norm along its orbits, the orbit of a point consists of all the points obtained by acting the group elements on that point. If the orbit is closed, then the minimum norm is attained, otherwise the minimum norm is attained in the closure of the orbit. We aim to connect the maximum likelihood estimation and stability of data with respect to the group action for the multivariate and matrix variate symmetric Laplace distribution, the non-exponential family of distributions. 
	
	In this paper, we define the symmetric Laplace distributions as the Laplace group model by taking the concentration matrix $\bm{\Psi}= \bm{\Sigma}^{-1}$ of the form $A^\top A$, where $A$ lies in the group $G$. Then, we show that the maximum likelihood estimation problems for the multivariate and matrix variate symmetric Laplace distributions are equivalent to norm minimizations over the group $G$. Finally, we build a connection between the stability of data with respect to group action and the properties of the likelihood function of the Laplace group model.

The paper is organized as follows: in section $2$, the preliminaries results about linear group action and stability with respect to this action are given. In section $3$, the multivariate symmetric Laplace distribution is defined as a group model, and demonstrates that the problem of maximum likelihood estimation for the Laplace group model is equivalent to minimizing the norm over the group. Then, we establish a connection between the properties of the likelihood function and stability of data with respect to the group action. In section $4$, we build the correspondence between the properties of the likelihood function of the matrix variate symmetric Laplace distribution and stability of data with respect to the group action. Finally, section $5$ concludes the paper.

	\section{Preliminaries}
	\noindent\textbf{\emph{Notations}.} The following notations are used throughout the paper: $\mathcal{N}_{p}(0,\bm{\Sigma})$ denotes the $p$-dimensional multivariate normal distribution, where $0$ is a $p$-dimensional vector with zero entries, and $\bm{\Sigma}$ is a $p \times p$ positive definite matrix. $\operatorname{tr}(\bm{A})$ and $\begin{vmatrix} \bm{A}	\end{vmatrix}$ denotes the trace and  the determinant of the matrix $\bm{A}$, respectively. $\bm{A}\otimes \bm{B}$ denotes the Kronecker product of matrices $\bm{A}$ and $\bm{B}$. $\bm{A}^\top$ denotes the transpose of the matrix $\bm{A}$. The notation $\mathcal{MN}_{p,q}(\bm{0}, \bm{\Sigma}_{1}, \bm{\Sigma}_{2})$ is used for matrix variate normal distribution, where $\bm{0}$ is a matrix of order $p\times q$ with all zero entries and $\bm{\Sigma}_{1}, \bm{\Sigma}_{2}$ are positive definite matrices of order $p \times p$ and $q\times q$, respectively. The notation ${Exp}(1)$ is used for the univariate exponential distribution with location parameter $0$ and scale parameter $1$. The notation $\mathcal{SL}_{p}(\bm{\Sigma})$ and  $\mathcal{MSL}_{p,q}(\bm{\Sigma}_{1}, \bm{\Sigma}_{2})$ are used for $p$-dimensional symmetric Laplace distribution and $p\times q$-dimensional matrix variate symmetric Laplace distribution, respectively. $GL(V)$ denotes the general linear group on a vector space $V$. $PD_{p}(\mathbb{K})$ denotes the cone of $p\times p$ positive definite matrices over the field $\mathbb{K}$. $\|.\|$ denotes Euclidean norm for vectors and Frobenius norm for matrices.

	In this section, a linear group action on a vector space and notions of stability with respect to this action have been taken as such, based on the paper of Améndola et al. \cite{amendola2021invariant}.
	\subsection{Stability with respect to the group action} A linear action on a real or complex vector space that corresponds to a representation $\rho: G \mapsto GL_{p}(\mathbb{K})$, that is, each  group element $g\in G$ is assigned to an invertible matrix in $GL_{p}(\mathbb{K})$, where $\mathbb{K}$ is $\mathbb{R}$ or $\mathbb{C}$. The action of $G$ on $\mathbb{K}^p $ defined as:
	\begin{align*}
		\cdot \bm{\colon} G\times \mathbb{K}^p & \to \mathbb{K}^p \\
		g\cdot v &\longmapsto \rho(g)v
	\end{align*}
	The group element  $g\in G$ acts on $\mathbb{K}^p$ by left multiplication with the matrix $\rho(g)$. 
	
	Let $v\in \mathbb{K}^{p}$. The orbit and stabilizer of $v$ are defined as: the orbit of $v$ is $G\cdot v=\{g\cdot v \mid  g\in G\}=\{\rho(g)v \mid g\in G\} \subseteq \mathbb{K}^p$, and the stabilizer of $v$ is $G_{v}= \{g\in G \mid  g\cdot v=v\} \subseteq G$. The orbit of $v$ and its closure depends only on the group $\rho (G)$. Thus $G\subseteq GL_{p}(\mathbb{K})$ can be assumed after restricting to the image of $\rho$. $G\subseteq GL_{p}(\mathbb{K})$  is called Zariski closed, if $G$ is the zero locus of a set of polynomials in the matrix entries, that is, $G\subseteq GL_{p}(\mathbb{K}) \subseteq (\mathbb{K}^p)^2$. $G$ is self-adjoint, if $g\in G$ implies that $g^\top \in G$ ($\mathbb{K}=\mathbb{R}$), and $g^* \in G$ $(\mathbb{K}=\mathbb{C}$).

	\begin{definition}[Stability]\cite{amendola2021invariant}
		Let $v\in \mathbb{K}^p$. Then, $v$ is called
		\begin{enumerate}
			\item unstable, if $ 0 \in$ $\overline{G\cdot v}$, i.e. $\inf\limits_{g\in G} \|g\cdot v\|^2=0$;
			\item semistable, if $0 \notin$ $\overline{G\cdot v}$, i.e. $\inf\limits_{g\in G} \|g\cdot v\|^2 >0$;
			\item polystable, if $v\not= 0$ and $G\cdot v$ is closed;
			\item stable, if $v$ is polystable and $G_v$ is finite.
		\end{enumerate}
	\end{definition} 
	
	Let $G\subseteq GL_{p}(\mathbb{K})$ be a Zariski closed self-adjoint subgroup. For each vector $v\in \mathbb{K}^p$, the map $\gamma_{v}$ be defined as
	\begin{align*}
		\gamma_{v} \bm{\colon} G & \to \mathbb{R}\\
		g & \longmapsto \|g\cdot v\|^2.
	\end{align*}
	Since $G$ is Zariski closed, that is, $G$ is defined by polynomial equations, so we can consider its tangent space $T_{\bm{I}_p}G \subseteq \mathbb{K}^{p \times p}$ at the identity matrix $\bm{I}_{p}$ and we can compute differential of the map $\gamma_{v}$ at identity
	\begin{align*}
		D_{\bm{I}_p}\gamma_{v} \bm{\colon} T_{\bm{I}_p} G & \to \mathbb{R}\\
		\dot{g}& \longmapsto 2 Re[\operatorname{tr}(\dot{g}vv^*)].
	\end{align*}
	The Moment map $\mu $ assigns this differential to each vector $v$, that is, 
	\begin{align*}
		\mu \bm{\colon} \mathbb{K}^p & \to Hom(T_{\bm{I}_p}G, \mathbb{R})\\
		v& \longmapsto D_{\bm{I}_p}\gamma_{v}.
	\end{align*}
	The moment map vanishes at a vector $v$ if and only if the identity matrix $\bm{I}_{p}$ is a critical point of the map $\gamma_{v}$.
	
	\begin{theorem}[Kempf-Ness] \cite{amendola2021invariant}
		\label{thm:1}
		Let $G\subseteq GL_{p}(\mathbb{K})$ be a Zariski closed self-adjoint subgroup with moment map $\mu$, where $\mathbb{K}\in \{\mathbb{R}, \mathbb{C}\}$. If $\mathbb{K}=\mathbb{R}$, let $K$ be the set of orthogonal matrices in $G$. If $\mathbb{K}=\mathbb{C}$, let $K$ be the set of unitary matrices in $G$. For $v\in \mathbb{K}^p$,
		\renewcommand{\labelenumi}{(\alph{enumi})}
		\begin{enumerate}
			\item The vector $v$ is of minimal norm in its orbit if and only if $\mu(v)=0$.
			\item If $\mu(v)=0$ and $w\in G\cdot v$ is such that $\|v\|=\|w\|$, then $w\in K\cdot v$.
			\item If the orbit $G\cdot v$ is closed, then there exists some $w\in G\cdot v$ with $\mu(w)=0$.
			\item If $\mu(v)=0$, then the orbit $G\cdot v$ is closed.
			\item The vector $v$ is polystable if and only if there exists $0\ne w \in G\cdot v$ with $\mu(w)=0$.
			\item The vector $v$ is semistable if and only if there exists $0\ne w \in \overline{G\cdot v}$ with $\mu(w)=0$.
		\end{enumerate}
	\end{theorem}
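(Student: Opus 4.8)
The plan is to build everything on the geodesic convexity of the length function $\gamma_v$. Since $G$ is self-adjoint and Zariski closed, it is stable under the polar (Cartan) decomposition: every $g\in G$ factors as $g=kp$ with $k\in K$ and $p=\exp(H)$ for a Hermitian $H$ in the Lie algebra $\mathfrak{g}$ of $G$. Because $k$ preserves the norm, $\gamma_v(kp)=\|p\cdot v\|^2$, so $\gamma_v$ descends to a function on the symmetric space $G/K$, whose geodesics through the identity coset are the curves $t\mapsto \exp(tH)$ with $H$ Hermitian. The first step I would carry out is the key computation: diagonalizing $H=U\,\mathrm{diag}(\lambda_1,\dots,\lambda_p)\,U^*$ and setting $u=U^*v$ gives
\[
\gamma_v(\exp(tH))=\|\exp(tH)v\|^2=\sum_{i} e^{2t\lambda_i}\,|u_i|^2,
\]
which is a nonnegative combination of exponentials and hence a convex function of $t$, strictly convex unless $\lambda_i$ is constant on the support of $u$. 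This single identity is the engine behind all six statements.

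Granting convexity, parts (a) and (b) are the first harvest. For (a), the vanishing of the moment map is by definition the statement that $D_{\bm{I}_p}\gamma_v=0$, i.e. that $\bm{I}_p$ is a critical point of $\gamma_v$ along every geodesic $\exp(tH)$; since each such restriction is convex, a critical point is a global minimum, so $\mu(v)=0$ is equivalent to $v$ having minimal norm in its orbit. For (b), if $w=g\cdot v$ also attains the minimal norm, I would connect the identity coset to the polar part of $g$ by a geodesic in $G/K$ and observe that $\gamma_v$, being convex and equal at both endpoints to the common minimal value, is constant along it; the strict-convexity criterion above then forces the geodesic's generator to act trivially on $v$, so its polar part stabilizes $v$ and $g$ lies in $K$ up to the stabilizer, giving $w\in K\cdot v$.

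The analytic core lies in (c) and (d). For (c), if $G\cdot v$ is closed then the sublevel set $\{x\in G\cdot v:\|x\|\le\|v\|\}$ is closed and bounded, hence compact, so $\gamma_v$ attains its minimum at some $w$ in the orbit, and (a) gives $\mu(w)=0$. The converse (d) --- that $\mu(v)=0$ forces $G\cdot v$ to be closed --- is the step I expect to be the main obstacle, since it is exactly where one must rule out the orbit escaping to a strictly smaller norm in its boundary. Here I would argue that if $v_n=g_n\cdot v\to v'$ with $v'\in\overline{G\cdot v}\setminus G\cdot v$, then $\|v'\|\le\|v\|=\inf_{g\in G}\gamma_v(g)$, while the properness of the Kempf--Ness function on $G/K$ (a consequence of the convexity above together with the completeness of the symmetric space) prevents a minimizing sequence from leaving every compact set; this contradiction shows no boundary point of smaller-or-equal norm can exist, forcing closedness.

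Finally, (e) and (f) are formal deductions from (a)--(d) together with two standard facts about linear actions: that the orbit of a nonzero vector avoids $0$ precisely when $0$ is not in its closure, and that the closure of any orbit contains a unique closed orbit, namely the one of minimal dimension. For (e), polystability gives a closed orbit, whence a minimal vector $w\ne 0$ by (c); conversely a minimal $w\ne 0$ in $G\cdot v$ yields a closed orbit by (d), so $v\ne 0$ is polystable. For (f), semistability ($0\notin\overline{G\cdot v}$) means the unique closed orbit inside $\overline{G\cdot v}$ is not $\{0\}$, and applying (c) to it produces $0\ne w\in\overline{G\cdot v}$ with $\mu(w)=0$; conversely such a $w$ generates a closed orbit different from $\{0\}$ inside $\overline{G\cdot v}$, and uniqueness of the closed orbit then forces $0\notin\overline{G\cdot v}$.
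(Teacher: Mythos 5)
The paper itself gives no proof of this theorem: it is quoted as a known result from Am\'endola et al.\ \cite{amendola2021invariant}, so your proposal has to be measured against the standard Kempf--Ness argument. Your framework is the right one: the internal polar decomposition $g=k\exp(H)$ of a Zariski closed self-adjoint group (a theorem of Mostow you are entitled to quote), and the identity $\|\exp(tH)\cdot v\|^2=\sum_i e^{2t\lambda_i}|u_i|^2$, giving convexity, with strict convexity unless the relevant eigencomponents vanish. On this basis your arguments for (a), (b), (c) and (e) are correct, and (f) is acceptable provided you acknowledge that "the orbit closure contains a unique closed orbit" is itself a nontrivial theorem for real reductive groups (Luna, Richardson--Slodowy), not a formality.

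The genuine gap is in (d), exactly the step you flag as the main obstacle, and your proposed resolution fails on both counts. First, the inequality is backwards: since $\mu(v)=0$ makes $\|v\|^2=\inf_{g\in G}\gamma_v(g)$, every point $v'$ of $\overline{G\cdot v}$ is a limit of points of norm $\ge\|v\|$ and hence satisfies $\|v'\|\ge\|v\|$, not $\le$; so no contradiction can be extracted from a boundary point this way. Second, the Kempf--Ness function on $G/K$ is not proper in general, and properness certainly does not follow from convexity plus completeness (constant functions are convex on complete spaces). A concrete counterexample: let $G=SL_2(\mathbb{R})$ act on $\mathfrak{sl}_2(\mathbb{R})$ by the adjoint action and take $v=\operatorname{diag}(1,-1)$; then $\mu(v)=0$ and $G\cdot v$ is closed, but the stabilizer is the noncompact diagonal torus, so $gK\mapsto\|\mathrm{Ad}(g)v\|^2$ is constant along a noncompact subset of $G/K$ and cannot be proper. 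The standard repair requires one genuinely new input that convexity alone cannot supply, namely the real Hilbert--Mumford criterion of Birkes: if $G\cdot v$ were not closed, there is a one-parameter subgroup, which after the usual reduction may be taken of the form $t\mapsto\exp(tH)$ with $H$ Hermitian in the Lie algebra of $G$, such that $w=\lim_{t\to\infty}\exp(tH)\cdot v$ exists and lies outside $G\cdot v$. Existence of the limit forces the eigencomponents of $v$ with $\lambda_i>0$ to vanish, so $\|w\|^2=\sum_{\lambda_i=0}|u_i|^2<\|v\|^2$ unless $Hv=0$ (which would give $w=v\in G\cdot v$, a contradiction); on the other hand $\mu(v)=0$ and convexity make $t\mapsto\|\exp(tH)\cdot v\|^2$ nondecreasing on $[0,\infty)$, so $\|w\|\ge\|v\|$ --- a contradiction that establishes closedness. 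Without Birkes' theorem (or an equivalent), your proof of (d), and with it the forward implications of (e) and (f) that depend on it, does not close.
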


	In the next section, the Laplace group model is defined. The maximum likelihood estimation problem of the multivariate symmetric Laplace distribution is explained and demonstrated, how it relates to the norm minimization over a group. And, a connection between the properties of the likelihood function of the multivariate symmetric Laplace distribution and the stability of data with respect to the group action, is established. 
	
	\section{Multivariate symmetric Laplace distribution}
	The density function of a $p$-dimensional symmetric Laplace distributed random vector $ \bm{Y}= (y_{1},y_{2},\cdots,y_{p})^{\top}$, $\bm{Y} \in \mathbb{R}^p$ with location parameter zero and scale parameter $\bm{\Sigma}_ {p\times p}$ (positive definite matrix), is
	\begin{equation}
		\label{eq:1}
		f_{\bm{Y}}(\bm{y}) = \frac{2}{(2\pi)^{\frac{p}{2}} \begin{vmatrix} \bm{\Sigma} \end{vmatrix}^{\frac{1}{2}}} \left( \frac{\bm{y}^\top \bm{\Sigma}^{-1} \bm{y}}{2} \right) ^{\nu/2}  K_{\nu} \left( \sqrt{2\bm{y}^\top \bm{\Sigma}^{-1} \bm{y}} \right),
	\end{equation}
	here, $\nu = \frac{2-p}{2}$, and $ K_{\nu}$ is the modified Bessel function of the third kind. This distribution is denoted as $\bm{Y} \sim \mathcal{SL}_{p}(\bm{\Sigma})$, see \cite{KKP}. For the definition and properties of the modified Bessel function of the third kind see \cite{B}, \cite{OLBC}, \cite{W}. 
	
	Next, a representation of the multivariate symmetric Laplace distribution is presented from our earlier work \cite{YS}, as this work is in process of publication, we reproduce the complete result here again.
	\begin{theorem}\cite{YS}
		\label{thm:5.1}
		A multivariate symmetric Laplace random variable $\bm{Y}$ has the representation 
		\begin{equation}
			\label{eq:5.2} 		
			\bm{Y}=\sqrt{W}\bm{Z},
		\end{equation}
		with random variable $\bm{Z}\sim \mathcal{N}_{p}(\bm{0}, \bm{\Sigma})$, the $p$-dimensional normal distribution with location parameter zero and scale parameter $\bm{\Sigma}$ and random variable $W$, independent of $\bm{Z}$, having a univariate exponential distribution with location parameter zero and scale parameter one.
	\end{theorem}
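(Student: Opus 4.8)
The plan is to verify the stochastic representation by computing the density of $\sqrt{W}\bm{Z}$ directly and checking that it coincides with the density in \eqref{eq:1}. Since $W$ and $\bm{Z}$ are independent, I would condition on $W=w$: given $W=w$, the vector $\sqrt{w}\,\bm{Z}$ is normally distributed as $\mathcal{N}_{p}(\bm{0}, w\bm{\Sigma})$, because $\operatorname{Cov}(\sqrt{w}\,\bm{Z})=w\bm{\Sigma}$. Hence its conditional density is
\[
f_{\bm{Y}\mid W}(\bm{y}\mid w)=\frac{1}{(2\pi w)^{p/2}\,\lvert\bm{\Sigma}\rvert^{1/2}}\exp\left(-\frac{\bm{y}^\top\bm{\Sigma}^{-1}\bm{y}}{2w}\right),
\]
where I used $\lvert w\bm{\Sigma}\rvert=w^{p}\lvert\bm{\Sigma}\rvert$. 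Integrating this against the exponential density $f_{W}(w)=e^{-w}$ on $(0,\infty)$ expresses the marginal density of $\bm{Y}=\sqrt{W}\bm{Z}$ as a scale mixture of normals.

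Writing $q=\bm{y}^\top\bm{\Sigma}^{-1}\bm{y}$, the marginal density then reduces to
\[
f_{\bm{Y}}(\bm{y})=\frac{1}{(2\pi)^{p/2}\,\lvert\bm{\Sigma}\rvert^{1/2}}\int_{0}^{\infty}w^{-p/2}\exp\left(-\frac{q}{2w}-w\right)\,dw.
\]
The key step is to evaluate this integral using the standard integral representation of the modified Bessel function of the third kind,
\[
\int_{0}^{\infty}x^{\nu-1}\exp\left(-\frac{a}{x}-bx\right)\,dx=2\left(\frac{a}{b}\right)^{\nu/2}K_{\nu}\!\left(2\sqrt{ab}\right),
\]
which applies here with $\nu-1=-p/2$, i.e. $\nu=\tfrac{2-p}{2}$, together with $a=q/2$ and $b=1$. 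Substituting these values gives $2\,(q/2)^{\nu/2}K_{\nu}(\sqrt{2q})$, since $2\sqrt{ab}=2\sqrt{q/2}=\sqrt{2q}$.

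Multiplying by the prefactor $(2\pi)^{-p/2}\lvert\bm{\Sigma}\rvert^{-1/2}$ reproduces exactly
\[
f_{\bm{Y}}(\bm{y})=\frac{2}{(2\pi)^{p/2}\,\lvert\bm{\Sigma}\rvert^{1/2}}\left(\frac{\bm{y}^\top\bm{\Sigma}^{-1}\bm{y}}{2}\right)^{\nu/2}K_{\nu}\!\left(\sqrt{2\,\bm{y}^\top\bm{\Sigma}^{-1}\bm{y}}\right),
\]
which is precisely \eqref{eq:1} with the same index $\nu=\tfrac{2-p}{2}$. The main obstacle is recognizing and correctly applying the Bessel integral representation, making sure the index $\nu$, the constants $a,b$, and the resulting argument $\sqrt{2q}$ of $K_{\nu}$ all line up with the target density; the symmetry $K_{\nu}=K_{-\nu}$ can be invoked to settle any sign convention on $\nu$. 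The remainder is routine bookkeeping of constants.
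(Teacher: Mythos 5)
Your proposal is correct and follows essentially the same route as the paper: both obtain the joint density of $(\bm{Y},W)$ (you via conditioning on $W=w$, the paper via the Jacobian of $\bm{Z}=\bm{Y}/\sqrt{W}$, which yields the identical expression), integrate out $W$, and evaluate the resulting integral $\int_{0}^{\infty}w^{-p/2}\exp\left(-w-\tfrac{q}{2w}\right)dw$ by the modified Bessel integral representation with index $\nu=\tfrac{2-p}{2}$. Your constants $a=q/2$, $b=1$ and the resulting argument $\sqrt{2q}$ all check out, and the appeal to $K_{\nu}=K_{-\nu}$ matches the paper's final step.
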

	\begin{proof}
		Since $W$ and $\bm{Z}$ are independent and $\bm{Y}=\sqrt{W}\bm{Z}$, then, the joint probability density function of $\bm{Y}$ and $W$ is
		\begin{align*}
			f_{\bm{Y},W} (\bm{y},w)&= f_{\bm{Z},W}(\bm{z},w) \left| J \right| \\
			&= f_{\bm{Z}}(\bm{z}) f_{W}(w)\left|J\right|,
		\end{align*}
		where $J = \begin{vmatrix} \frac{\partial(\bm{Z},W)}{\partial(\bm{Y},W)}	\end{vmatrix} =\frac{1}{(W)^{\frac{p}{2}}}$ is the Jacobian. Hence, the joint density function of $\bm{Y}$ and $W$ is
		
		\begin{equation}
			\label{eq:7.1}
			f_{\bm{Y},W}(\bm{y},w)= \frac{1}{(2\pi)^{\frac{p}{2}} \begin{vmatrix} \bm{\Sigma} \end{vmatrix}^{\frac{1}{2}}  w^\frac{p}{2}} \exp\left(-w-\frac{1}{2w} \bm{y}^\top  \bm{\Sigma}^{-1} \bm{y}\right),
		\end{equation}
		and the density function of $\bm{Y}$ is
		
		\begin{align*}
			f_{\bm{Y}}(\bm{y}) &= \int_{0}^{\infty} \frac{1}{(2\pi)^{\frac{p}{2}} \begin{vmatrix} \bm{\Sigma} \end{vmatrix}^{\frac{1}{2}}  w^\frac{p}{2}} \exp\left(-w-\frac{1}{2w} \bm{y}^\top  \bm{\Sigma}^{-1} \bm{y}\right) \,dw\\
			&=\frac{1}{(2\pi)^{\frac{p}{2}} \begin{vmatrix} \bm{\Sigma} \end{vmatrix}^{\frac{1}{2}} } \int_{0}^{\infty} \frac{1}{\left(w^{\frac{p-2}{2}+1}\right)}  \exp\left(-w-\frac{\left(\sqrt{2\bm{y}^\top  \bm{\Sigma}^{-1} \bm{y}}\right)^{2}}{4w} \right) \,dw \\
			&= \frac{2}{(2\pi)^{\frac{p}{2}} \begin{vmatrix} \bm{\Sigma} \end{vmatrix}^{\frac{1}{2}} } \left(\frac{\bm{y}^\top  \bm{\Sigma}^{-1} \bm{y}}{2}\right)^{\nu/2}  K_{-\nu}\left(\sqrt{2\bm{y}^\top  \bm{\Sigma}^{-1} \bm{y}}\right),
		\end{align*}
		where $\nu=\frac{2-p}{2}$, and $K_{-\nu}(x)$ is the modified Bessel function of the third kind given as
		
		\begin{equation*}
			K_{-\nu}(x)=\frac{1}{2}\left(\frac{x}{2}\right)^{-\nu} \int_{0}^{\infty} \frac{1}{(t)^{-\nu+1}} \exp\left(-t-\frac{x^2}{4t}\right) \,dt .
		\end{equation*}
		
		From the properties of modified Bessel function of the third kind, $K_{-\nu}(x)=K_{\nu}(x)$. Hence, the density function of $\bm{Y}$ is
		\begin{equation*}
			f_{\bm{Y}}(\bm{y})=\frac{2}{(2\pi)^{\frac{p}{2}} \begin{vmatrix} \bm{\Sigma} \end{vmatrix}^{\frac{1}{2}} } \left(\frac{\bm{y}^\top  \bm{\Sigma}^{-1} \bm{y}}{2}\right)^{\nu/2}  K_{\nu}\left(\sqrt{2\bm{y}^\top  \bm{\Sigma}^{-1} \bm{y}}\right).
		\end{equation*}
	\end{proof}

	\subsection{Multivariate symmetric Laplace distributions as group model}
	The Laplace model is a subset $\mathcal{L} \subseteq PD_{p}(\mathbb{R})$ which contains the concentration matrices $\bm{\Psi}=\bm{\Sigma}^{-1}$ of the multivariate $p$-dimensional symmetric Laplace distributions, i.e.
	\[\mathcal{L} = \{\mathcal{SL}_{p}(\bm{\Sigma}) \mid \bm{\Psi}=\bm{\Sigma}^{-1} \in PD_{p}(\mathbb{R})\}.\]
	
	\begin{definition}
		The Laplace model via symmterization is defined as 
		\[\mathcal{L}_{\mathcal{A}}= \{A^\top A \mid A\in \mathcal{A}\}, \] 
		for a subset $\mathcal{A}\subseteq GL_{p}(\mathbb{R})$. If $\mathcal{A}$ is subgroup of $GL_{p}(\mathbb{R})$, then it is called Laplace group model.
	\end{definition}

	\begin{theorem}
		Let $\mathcal{L} \subseteq PD_{p}(\mathbb{R})$ be a Laplace model, then there exist a subset $\mathcal{A}\subseteq GL_{p}(\mathbb{R})$ such that $\mathcal{L}=\mathcal{L}_{\mathcal{A}}$.
	\end{theorem}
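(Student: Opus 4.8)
The plan is to exploit the elementary fact that every positive definite matrix factors as $A^\top A$ for some invertible $A$, and then to assemble these factors into the desired set $\mathcal{A}$. The statement only asserts the existence of \emph{some} subset $\mathcal{A}\subseteq GL_p(\mathbb{R})$, so no structural (subgroup) properties need be verified; a pointwise factorization suffices.

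First I would recall that a matrix $\bm{\Psi}\in PD_p(\mathbb{R})$ is symmetric with strictly positive eigenvalues. By the spectral theorem, write $\bm{\Psi}=QDQ^\top$ with $Q$ orthogonal and $D=\operatorname{diag}(\lambda_1,\dots,\lambda_p)$, $\lambda_i>0$. Setting $\bm{\Psi}^{1/2}:=QD^{1/2}Q^\top$ yields a symmetric positive definite square root, which is in particular invertible since each $\lambda_i^{1/2}>0$. Taking $A_{\bm{\Psi}}:=\bm{\Psi}^{1/2}$ then gives $A_{\bm{\Psi}}^\top A_{\bm{\Psi}}=\bm{\Psi}^{1/2}\bm{\Psi}^{1/2}=\bm{\Psi}$ together with $A_{\bm{\Psi}}\in GL_p(\mathbb{R})$. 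Equivalently, one could invoke the Cholesky factorization $\bm{\Psi}=LL^\top$ with $L$ lower triangular of positive diagonal and set $A_{\bm{\Psi}}=L^\top$, which is likewise invertible.

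Next I would define $\mathcal{A}:=\{A_{\bm{\Psi}}\mid \bm{\Psi}\in\mathcal{L}\}\subseteq GL_p(\mathbb{R})$ and check the two inclusions. For $\mathcal{L}\subseteq\mathcal{L}_{\mathcal{A}}$: any $\bm{\Psi}\in\mathcal{L}$ equals $A_{\bm{\Psi}}^\top A_{\bm{\Psi}}$ with $A_{\bm{\Psi}}\in\mathcal{A}$, so $\bm{\Psi}\in\mathcal{L}_{\mathcal{A}}$. For $\mathcal{L}_{\mathcal{A}}\subseteq\mathcal{L}$: any element of $\mathcal{L}_{\mathcal{A}}$ has the form $A^\top A$ with $A\in\mathcal{A}$; by construction $A=A_{\bm{\Psi}}$ for some $\bm{\Psi}\in\mathcal{L}$, whence $A^\top A=\bm{\Psi}\in\mathcal{L}$. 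The two inclusions give $\mathcal{L}=\mathcal{L}_{\mathcal{A}}$.

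I do not anticipate a genuine obstacle, since the entire content is the existence of an invertible square root of a positive definite matrix. The single point deserving a line of care is the invertibility of $A_{\bm{\Psi}}$, needed to land inside $GL_p(\mathbb{R})$ rather than merely $\mathbb{R}^{p\times p}$; this is exactly what positive-definiteness (strictly positive eigenvalues) supplies. It is worth remarking that the resulting $\mathcal{A}$ is generally \emph{not} a subgroup, so this theorem justifies the broader ``symmetrization'' construction $\mathcal{L}_{\mathcal{A}}$ while leaving the more restrictive Laplace \emph{group} model as a genuine special case.
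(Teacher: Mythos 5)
Your proposal is correct and follows essentially the same route as the paper: both use the spectral decomposition to form the symmetric square root $A_{\bm{\Psi}}=\bm{\Psi}^{1/2}=Q\Lambda^{1/2}Q^\top$ and take $\mathcal{A}=\{\bm{\Psi}^{1/2}\mid \bm{\Psi}\in\mathcal{L}\}$. Your write-up merely adds the explicit verification of the two inclusions (and the Cholesky alternative), which the paper leaves implicit.
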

	
	\begin{proof}
		A positive definite matrix $\bm{\Psi} \in PD_{p}(\mathbb{R})$ has a spectral decomposition, that is, $\bm{\Psi} =Q\Lambda Q^\top$, where $Q$ is orthogonal matrix and $\Lambda=\operatorname{diag}(\lambda_{1},\lambda_{2},\cdots, \lambda_{p})$, $\lambda_{i}>0$, then $\Lambda^{1/2}=\operatorname{diag}\left( \sqrt{\lambda_{1}},\sqrt{\lambda_{2}},\cdots, \sqrt{\lambda_{p}} \right)$. Consider $A=\bm{\Psi}^{1/2}=Q\Lambda^{1/2} Q^\top$, then $\bm{\Psi}=A^\top A$. Consider the subset $\mathcal{A}= \{\bm{\Psi}^{1/2} \mid \bm{\Psi} \in \mathcal{L}\} \subseteq GL_{p}(\mathbb{R})$. Then, the subset $\mathcal{A}$ satisfies $\mathcal{L}=\mathcal{L}_{\mathcal{A}}$. So, any Laplace model is of the form $\mathcal{L}_{\mathcal{A}}$.
	\end{proof}
	
	The Laplace group model is constructed using the representations of a group $G$ on a vector space $ \mathbb{R}^p$ ($G \to GL( \mathbb{R}^p)$). This construction depends on the image of the group $G$ in $GL( \mathbb{R}^p)$, so the group $G$ can be viewed as a subgroup of $GL( \mathbb{R}^p)$.
	
	\begin{definition}[Laplace group model]
		\label{def:7.1}
		The Laplace group model defined by $G\subseteq GL( \mathbb{R}^p)$ consists of multivariate symmetric Laplace distributions with $\bm{\Psi} \in PD_{p}(\mathbb{R})$ lying in the set 
		\[\mathcal{L}_{G}= \{A^\top A \mid A\in G\}.\]
	\end{definition}

	Next, the maximization of the log-likelihood function is compared to the minimization of the norm through the group actions.
	
	\subsection{Maximum likelihood estimation}
	Let $\underline{\bm{Y}}=(\bm{Y}_1,\bm{Y}_2,\ldots ,\bm{Y}_N)$ be random sample from a $p$-dimensional symmetric Laplace distribution $\mathcal{SL}_{p}(\bm{\Sigma})$. Then, the likelihood function is
	\begin{equation*}
		L(\bm{\Sigma})=f(\underline{\bm{Y}};\bm{\Sigma})= \prod_{i=1}^{N} f(\bm{Y_i}),
	\end{equation*}
	where $f(\bm{Y}_{i})$ is as in \eqref{eq:1}, and the log-likelihood function (up to an additive constant) is 
	\begin{equation}
		\label{eq:2}
		\ell(\bm{\Sigma})=- \frac{N}{2} \log \begin{vmatrix}\bm{\Sigma} \end{vmatrix} + \frac{\nu}{2} \sum_{i=1}^{N} \log ({\bm{Y}_{i}}^\top \bm{\Sigma}^{-1} \bm{Y}_{i})+ \sum_{i=1}^{N}\log K_{\nu} \left( \sqrt{2 ({\bm{Y}_{i}}^\top \bm{\Sigma}^{-1} \bm{Y}_{i})}  \right).
	\end{equation}
	A maximum likelihood estimate of the parameter $\bm{\Sigma}$ is that value of $\bm{\Sigma}\in PD_{p}(\mathbb{R})$, which maximizes the log-likelihood function \eqref{eq:2} given the data $\bm{Y}_{1},\bm{Y}_{2},\cdots,\bm{Y}_{N}$. 
	
	It is not easy to directly maximize the log-likelihood function \eqref{eq:2} with respect to the parameter $\bm{\Sigma}$, as it contains the modified Bessel function of the third kind. So, to overcome this problem, we are using \hyperref[thm:5.1]{Theorem \ref{thm:5.1}}, which is a representation of a multivariate symmetric Laplace random variable $\bm{Y}$. We are using the joint density function of $\bm{Y}$ and $W$ in the representation $\bm{Y}=\sqrt{W}\bm{Z}$ of a multivariate symmetric Laplace random variable $\bm{Y}$, where random variables $\bm{Z}\sim \mathcal{N}_{p}(0, \bm{\Sigma})$ and $W\sim Exp(1)$. 
	
	\begin{lemma}
		\label{lemma:5.2}
		Let $\left(\underline{\bm{Y}},\underline{W}\right)=\left(\bm{Y}_{1},\bm{Y}_{2},\cdots,\bm{Y}_{N},W_{1},W_{2},\cdots,W_{N}\right)$ be complete random sample from the joint distribution of $\bm{Y}$ and $W$, where $\bm{Y} \sim \mathcal{SL}_{p}(\bm{\Sigma})$ and $W\sim Exp(1)$. If $\bm{\Sigma}^{+}$ maximizes the complete data likelihood function, then $\bm{\Sigma}^{+}$ also maximizes the likelihood function of $p$-dimensional symmetric Laplace distribution.
	\end{lemma}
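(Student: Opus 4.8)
The plan is to build the complete-data likelihood from the joint density \eqref{eq:7.1}, maximize it in closed form, and then push that maximizer through to the Laplace likelihood \eqref{eq:2} using the integral representation $f_{\bm Y}(\bm y)=\int_{0}^{\infty} f_{\bm Y,W}(\bm y,w)\,dw$ derived in the proof of \hyperref[thm:5.1]{Theorem \ref{thm:5.1}}. Writing $L_{c}(\bm\Sigma)=\prod_{i=1}^{N} f_{\bm Y,W}(\bm Y_i,W_i)$ and $L(\bm\Sigma)=\prod_{i=1}^{N} f_{\bm Y}(\bm Y_i)$, the factor-by-factor integral representation yields
\[
L(\bm\Sigma)=\int_{0}^{\infty}\!\!\cdots\!\int_{0}^{\infty}\prod_{i=1}^{N} f_{\bm Y,W}(\bm Y_i,w_i)\,dw_1\cdots dw_N ,
\]
so the Laplace likelihood is exactly the integral over the latent $w_i$ of the complete-data likelihood. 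This identity is the bridge I would use to move optimality from $L_{c}$ to $L$.

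First I would take the logarithm of \eqref{eq:7.1} and isolate the $\bm\Sigma$-dependent part of $\log L_{c}(\bm\Sigma)$, namely $-\tfrac{N}{2}\log\lvert\bm\Sigma\rvert-\tfrac12\sum_{i=1}^{N} W_i^{-1}\,\bm Y_i^\top\bm\Sigma^{-1}\bm Y_i$; the remaining terms $-W_i$, $-\tfrac p2\log W_i$ and the Gaussian constant are free of $\bm\Sigma$ and drop out. Setting $\bm\Psi=\bm\Sigma^{-1}$ turns this into the familiar weighted-Gaussian objective $\tfrac N2\log\lvert\bm\Psi\rvert-\tfrac12\operatorname{tr}\big(\bm\Psi\sum_i W_i^{-1}\bm Y_i\bm Y_i^\top\big)$, which is strictly concave in $\bm\Psi$ on $PD_{p}(\mathbb R)$; its unique stationary point gives the closed-form maximizer $\bm\Sigma^{+}=\tfrac1N\sum_{i=1}^{N} W_i^{-1}\bm Y_i\bm Y_i^\top$. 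This part is routine, and it is exactly where the simplification away from the Bessel function in \eqref{eq:2} pays off.

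The main obstacle is the transfer step, i.e. showing that this $\bm\Sigma^{+}$ also maximizes the integral $L(\bm\Sigma)$. One cannot simply differentiate under the integral and maximize the integrand, because the integrand $\prod_i f_{\bm Y,W}(\bm Y_i,w_i)$ is maximized at a point $\tfrac1N\sum_i w_i^{-1}\bm Y_i\bm Y_i^\top$ that varies with the integration variables $w_i$ and need not match a single fixed $\bm\Sigma^{+}$. The route I would take is to prove a domination inequality directly between the two integrals, controlling the gap by the Gibbs inequality applied to the conditional law $f_{W\mid \bm Y}(\cdot\mid\bm Y_i;\bm\Sigma)$, which is the minorization underlying the EM construction. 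The crux, and the step I expect to absorb essentially all of the effort, is reconciling the observed-weight maximizer built from $W_i^{-1}$ with the conditional-expectation weights $\mathbb E\!\left[W_i^{-1}\mid\bm Y_i\right]$ that appear in the stationarity condition for the marginal likelihood \eqref{eq:2}; verifying this self-consistency, rather than the closed-form maximization of $L_{c}$, is the heart of the argument.
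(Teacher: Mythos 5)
Your proposal is incomplete at exactly the point you yourself flag as the crux: you never prove the transfer step, you only outline a plan (Gibbs inequality, EM minorization) for proving it. That gap cannot be closed, because the reconciliation you hope for, namely that the observed-weight maximizer $\bm{\Sigma}^{+}=\tfrac1N\sum_{i=1}^{N}W_i^{-1}\bm{Y}_i\bm{Y}_i^\top$ also satisfies the stationarity condition of the marginal likelihood (which involves conditional-expectation weights $\mathbb{E}\left[W_i^{-1}\mid\bm{Y}_i\right]$, i.e.\ Bessel-function ratios), is false in general. Take $p=N=1$. The complete-data log-likelihood is, up to constants, $-\tfrac12\log\sigma^2-\tfrac{y^2}{2w\sigma^2}$, maximized at $\sigma^2_{+}=y^2/w$. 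The marginal density is the univariate Laplace $\tfrac{1}{\sqrt{2}\,\sigma}\exp\left(-\sqrt{2}\,|y|/\sigma\right)$ (use $K_{1/2}(x)=\sqrt{\pi/(2x)}\,e^{-x}$ in \eqref{eq:1}), whose likelihood is uniquely maximized at $\hat{\sigma}^2=2y^2$. For any observed $w\neq\tfrac12$, say $w=1$, $y=1$, these disagree ($\sigma^2_{+}=1$ versus $\hat{\sigma}^2=2$), so the complete-data maximizer does not maximize the Laplace likelihood. Hence no argument, EM-based or otherwise, can establish the lemma under this natural reading of its statement; the "self-consistency" you would need simply fails off a measure-zero set of $w$'s.

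The instructive comparison with the paper: its proof is precisely the naive step you correctly rejected. It writes $L(\bm{\Sigma}\mid\underline{\bm{Y}})=\int L_{c}(\bm{\Sigma}\mid\underline{\bm{Y}},\underline{W})\,d\underline{W}$ and then integrates the hypothesis $L_{c}(\bm{\Sigma}\mid\underline{\bm{Y}},\underline{W})\le L_{c}(\bm{\Sigma}^{+}\mid\underline{\bm{Y}},\underline{W})$ over $\underline{W}$. But that hypothesis holds only at the observed $\underline{W}$ (and $\bm{\Sigma}^{+}$ is built from that observed $\underline{W}$), whereas under the integral sign $\underline{W}$ is a dummy variable; the step requires $L_{c}(\bm{\Sigma}\mid\underline{\bm{Y}},\underline{w})\le L_{c}(\bm{\Sigma}^{+}\mid\underline{\bm{Y}},\underline{w})$ for almost every $\underline{w}$, which is exactly what you observed fails, since the maximizer of the integrand moves with $\underline{w}$. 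In other words, the paper's proof conflates the observed sample with the integration variable, and the counterexample above shows the lemma is false as stated rather than merely unproven. The only reading under which the conclusion holds is the degenerate one where $\bm{\Sigma}^{+}$ maximizes $L_{c}(\cdot\mid\underline{\bm{Y}},\underline{w})$ for every $\underline{w}$ simultaneously, which is neither what the paper uses downstream nor what your closed-form $\bm{\Sigma}^{+}$ provides. So: your diagnosis of the obstacle is sharper than the paper's treatment, but your proposed repair cannot succeed, and the correct conclusion is that the lemma itself needs to be weakened (e.g.\ to an EM-type statement about conditional expectations) rather than proved.
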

	\begin{proof}
		Let $\left(\underline{\bm{Y}},\underline{W}\right)$ be random sample from the joint distribution of $\bm{Y}$ and $W$, where $\bm{Y} \sim \mathcal{SL}_{p}(\bm{\Sigma})$ and $W\sim Exp(1)$. The complete data likelihood function for $\bm{\Sigma}$ is $L_{c}\left( \bm{\Sigma} \mid \underline{\bm{Y}},\underline{W} \right) =  f_{c}\left( (\underline{\bm{Y}},\underline{W});\bm{\Sigma} \right)$
		
		The likelihood function of $\bm{\Sigma}$ given the data $\underline{\bm{Y}}$, is
		\begin{equation*}
			L(\bm{\Sigma} \mid \underline{\bm{Y}})=f(\underline{\bm{Y}}; \bm{\Sigma})= \int_{\mathbb{R}^N} f_{c}\left( (\underline{\bm{Y}},\underline{W});\bm{\Sigma} \right)\, d\underline{W}=\int_{\mathbb{R}^N} L_{c}(\bm{\Sigma} \mid \underline{\bm{Y}},\underline{W} )\, d\underline{W}.
		\end{equation*}
		
		Now suppose
		\begin{equation*}
			\arg \max_{\bm{\Sigma} \in PD_{p}(\mathbb{R})} 	L_{c}\left( \bm{\Sigma} \mid \underline{\bm{Y}},\underline{W} \right)=\bm{\Sigma}^{+},
		\end{equation*}
		that is, 
		\begin{equation*}
			L_{c}\left( \bm{\Sigma} \mid \underline{\bm{Y}},\underline{W} \right)\le L_{c}\left( \bm{\Sigma}^{+} \mid \underline{\bm{Y}},\underline{W} \right)   \hspace{.9mm} \forall \bm{\Sigma}.
		\end{equation*}
		
		Thus,
		\begin{align*}
			L(\bm{\Sigma} \mid \underline{\bm{Y}} )&=\int_{\mathbb{R}^N} 	L_{c}\left( \bm{\Sigma} \mid \underline{\bm{Y}},\underline{W} \right)\, d\underline{W}\\
			& \le\int_{\mathbb{R}^N}  L_{c}( \bm{\Sigma}^{+} \mid \underline{\bm{Y}},\underline{W} ) \, d\underline{W} \hspace{.9mm} \forall \bm{\Sigma} \\
			&=L(\bm{\Sigma}^{+}\mid  \underline{\bm{Y}}),  \hspace{.9mm} \forall \bm{\Sigma}.
		\end{align*} 
		This implies 
		\[\arg \max_{\bm{\Sigma} \in PD_{p}(\mathbb{R})} L(\bm{\Sigma}\mid  \underline{\bm{Y}}) =\bm{\Sigma}^{+}.\]
	\end{proof}
	
	From the above \hyperref[lemma:5.2]{Lemma \ref{lemma:5.2}}, we can use the complete data likelihood function to find the MLE of $\bm{\Sigma}$. Using the joint probability density function of $\bm{Y}$ and $W$ given in equation \eqref{eq:7.1}, the complete data log-likelihood function (up to an additive constant) is 
	\begin{equation*}
		\ell_{c}(\bm{\Sigma})= -\frac{N}{2}\log \begin{vmatrix}\bm{\Sigma} \end{vmatrix} -\frac{1}{2}\sum_{i=1}^{N}\frac{1}{W_{i}} \left({\bm{Y}_{i}}^\top \bm{\Sigma}^{-1} \bm{Y}_{i}\right) -\sum_{i=1}^{N}\left(\frac{p}{2} \log W_{i} +W_{i}\right).
	\end{equation*}
	
	Since the last term of this equation does not contain any unknown parameter, it can be ignored for maximization of $\ell_{c}(\bm{\Sigma})$ with respect to $\bm{\Sigma}$.
	Therefore, for the MLE of the parameter $\bm{\Sigma}$, we have to maximize the complete data log-likelihood function (up to an additive constant)
	\begin{equation}
		\ell_{c}(\bm{\Sigma})=-\frac{N}{2}\log \begin{vmatrix}\bm{\Sigma} \end{vmatrix} -\frac{1}{2}\sum_{i=1}^{N}\frac{1}{W_{i}} \left({\bm{Y}_{i}}^\top \bm{\Sigma}^{-1} \bm{Y}_{i}\right),
	\end{equation}
	where $\left(\underline{\bm{Y}},\underline{W}\right)=\left(\bm{Y}_{1},\bm{Y}_{2},\cdots,\bm{Y}_{N},W_{1},W_{2},\cdots,W_{N}\right)$ is the random sample from the joint distribution of $\bm{Y}$ and $W$.

	\subsection{Maximizing log-likelihood as norm minimization} 
	In this subsection, the maximization of the complete data log-likelihood function is compared to the minimization of the norm through the group actions.
	
	For a random sample $(\bm{Y}_{1},\bm{Y}_{2},\cdots,\bm{Y}_{N},W_{1},W_{2},\cdots,W_{N})$ from the joint distribution of $\bm{Y}$ and $W$, let 
	\begin{equation}
		\label{eq:7.2}
		\bm{Y}^{W}=\left(\frac{1}{\sqrt{W_{1}}} \bm{Y}_{1},\frac{1}{\sqrt{W_{2}}}  \bm{Y}_{2},\cdots, \frac{1}{\sqrt{W_{N}}} \bm{Y}_{N}\right)\in  \mathbb{R}^{pN}.
	\end{equation}
	Then, the action of the group $G$ on $ \mathbb{R}^{pN}$ is defined $\forall A\in G$ and $\bm{Y}^{W}\in \mathbb{R}^{pN}$ as:	
	\[A. \bm{Y}^{W}=\left(\frac{1}{\sqrt{W_{1}}}A \bm{Y}_{1},\frac{1}{\sqrt{W_{2}}} A \bm{Y}_{2},\cdots,\frac{1}{\sqrt{W_{N}}}A \bm{Y}_{N}\right).\] 
	The action of $G$ on $ \mathbb{R}^{pN}$ is considered by taking the group $G$ as diagonally embedded in $GL( \mathbb{R}^{pN})$
	
	\begin{remark}
		\label{rem:7.1}
		If $\left(\underline{\bm{Y}},\underline{W}\right)=\left(\bm{Y}_{1},\bm{Y}_{2},\cdots,\bm{Y}_{N},W_{1},W_{2},\cdots,W_{N}\right)$ is the random sample from the joint distribution of $\bm{Y}$ and $W$, then \[ \bm{Y}^{W}=\left(\frac{1}{\sqrt{W_{1}}} \bm{Y}_{1},\frac{1}{\sqrt{W_{2}}}  \bm{Y}_{2},\cdots, \frac{1}{\sqrt{W_{N}}} \bm{Y}_{N}\right)\] is a statistic (data), and the MLE of the parameter given $\bm{Y}^{W}$ is equivalent to stating that the MLE of the parameter given $\left(\underline{\bm{Y}},\underline{W}\right)$. Here, we talk about the stability of the data $\bm{Y}^{W}$ rather than the random sample $\left(\underline{\bm{Y}},\underline{W}\right)$, with respect to the group action.
	\end{remark}
	
	The norm $\|A\cdot \bm{Y}^{W}\|$ can be written as 
	\begin{align}
		\label{eq:5}
		\|A\cdot  \bm{Y}^{W}\|^2 =\sum_{i=1}^{N} \left(\frac{1}{\sqrt{W_{i}}} A \bm{Y}_{i} \right)^\top \left(\frac{1}{\sqrt{W_{i}}} A \bm{Y}_{i} \right)
		=\sum_{i=1}^{N}\frac{1}{W_{i}} \operatorname{tr} \left(A^\top A \bm{Y}_{i} \bm{Y}_{i}^\top \right).
	\end{align}
	
	Now, consider the function to be maximized with respect to $\bm{\Sigma} \in PD_{p}(\mathbb{R})$
	\[	\ell_{c}(\bm{\Sigma})=-N \log \begin{vmatrix}\bm{\Sigma} \end{vmatrix} -\sum_{i=1}^{N} \frac{1}{W_{i}} { \bm{Y}_{i}}^\top \bm{\Sigma}^{-1}  \bm{Y}_{i} ,  \]
	Take  $\bm{\Sigma}^{-1} =\bm{\Psi} \in PD_{p}(\mathbb{R})$, then
	\begin{equation*} 
		\ell_{c}(\bm{\Psi})= N\log \begin{vmatrix}\bm{\Psi} \end{vmatrix} -\sum_{i=1}^{N}  \frac{1}{W_{i}} { \bm{Y}_{i}}^\top \bm{\Psi}  \bm{Y}_{i} =  N \log \begin{vmatrix}\bm{\Psi} \end{vmatrix} -\sum_{i=1}^{N}  \frac{1}{W_{i}} \operatorname{tr}\left(\bm{\Psi}  \bm{Y}_{i}{ \bm{Y}_{i}}^\top \right). 
	\end{equation*}  
	Since $\bm{\Psi} \in \mathcal{L}_{G}$, then $\bm{\Psi}$ can be written as $A^\top A$, where $A$ is an invertible matrix of order $p$ and 
	\begin{equation}
		\label{eq:6}
		\ell_{c}(\bm{\Psi})=	\ell_{c}(A^\top A)=N \log \begin{vmatrix} A^\top A \end{vmatrix} -\sum_{i=1}^{N}  \frac{1}{W_{i}} \operatorname{tr}(A^\top A  \bm{Y}_{i}{ \bm{Y}_{i}}^\top).
	\end{equation}
	
	Thus, comparing \eqref{eq:5} and \eqref{eq:6}, maximizing the complete data log-likelihood function $\ell_{c}(\bm{\Psi})$ over $\mathcal{L}_{G}=\{ \bm{\Psi}=A^\top A \mid A\in G\}$ is equivalent to minimizing
	\[ -\ell_{c}(A^\top A)=- N \log\begin{vmatrix} A^\top A \end{vmatrix}+\|A\cdot \bm{Y}^{W}\|^2 \]
	over $A\in G$.
	
	Next, we give the following result in which we can do this minimization into two steps, if the group is considered to be closed under non-zero scalar multiples. $G_{SL}^{\pm}$ is the subgroup of $G$ consisting of matrices with determinants $1$ or $-1$.
	
	\begin{theorem}
		\label{prop:2}
		Let $ \bm{Y}^{W}=\left(\frac{1}{\sqrt{W_{1}}} \bm{Y}_{1},\frac{1}{\sqrt{W_{2}}}  \bm{Y}_{2},\cdots, \frac{1}{\sqrt{W_{N}}} \bm{Y}_{N}\right) \in  \mathbb{R}^{pN}$ be as defined in \eqref{eq:7.2}. If the group $G\subseteq GL( \mathbb{R}^p)$ is closed under non-zero scalar multiples, then
		\begin{equation}
			\sup_{A\in G} \left(\ell_{c}(A^\top A) \right) \equiv
			\inf _{\alpha \in {\mathbb{R}}_{>0}} \left( \alpha \left( 	\inf_{B\in G^\pm_{SL}} \| B\cdot  \bm{Y}^{W}\|^2  \right)  -pN\log \alpha \right).
		\end{equation}
		The MLEs, if they exists are the matrices $\alpha B^\top B$, where $B$ minimizes the norm $\|B\cdot  \bm{Y}^{W}\|$ under the action of $G^\pm_{SL}$ on $\mathbb{R}^{pN}$ and $\alpha \in {\mathbb{R}}_{>0}$ is the unique value minimizing the outer infimum. 
	\end{theorem}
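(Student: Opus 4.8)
The plan is to exploit the fact that $G$ is closed under non-zero scalar multiples in order to factor each $A\in G$ into a scalar part and a part of determinant $\pm 1$, thereby decoupling the optimization over $A$ into an inner norm-minimization over $G^\pm_{SL}$ and a one-dimensional outer optimization in the scalar. First I would record the base identity obtained by comparing \eqref{eq:5} and \eqref{eq:6}, namely
\[
\ell_c(A^\top A)=N\log\begin{vmatrix}A^\top A\end{vmatrix}-\|A\cdot\bm{Y}^W\|^2 ,
\]
so that $\sup_{A\in G}\ell_c(A^\top A)$ is the maximization of this quantity over $A\in G$.

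The key structural step is the factorization. Since $G$ is closed under non-zero scalar multiples, every $A\in G$ can be written as $A=\sqrt{\alpha}\,B$ with $\alpha:=\begin{vmatrix}A\end{vmatrix}^{2/p}>0$ and $B:=A/\sqrt{\alpha}\in G$ satisfying $\begin{vmatrix}B\end{vmatrix}=\pm 1$, i.e.\ $B\in G^\pm_{SL}$; conversely every pair $(\alpha,B)\in\mathbb{R}_{>0}\times G^\pm_{SL}$ produces such an $A$, and $A^\top A=\alpha\,B^\top B$. I would then substitute this into the base identity, using $\begin{vmatrix}A^\top A\end{vmatrix}=\alpha^p$ (because $\det(B^\top B)=1$) and, via \eqref{eq:5}, $\|A\cdot\bm{Y}^W\|^2=\alpha\,\|B\cdot\bm{Y}^W\|^2$, to obtain
\[
\ell_c(A^\top A)=pN\log\alpha-\alpha\,\|B\cdot\bm{Y}^W\|^2 .
\]

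The optimization now splits cleanly. For fixed $\alpha>0$ the term $pN\log\alpha$ is constant and $\alpha>0$, so maximizing over $B$ is the same as minimizing $\|B\cdot\bm{Y}^W\|^2$ over $G^\pm_{SL}$; writing $M:=\inf_{B\in G^\pm_{SL}}\|B\cdot\bm{Y}^W\|^2$ and passing to negatives (maximizing $\ell_c$ is equivalent to minimizing $-\ell_c$, which is how I read the symbol $\equiv$), this yields
\[
\inf_{A\in G}\bigl(-\ell_c(A^\top A)\bigr)=\inf_{\alpha>0}\bigl(\alpha M-pN\log\alpha\bigr),
\]
the right-hand side of the claim. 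The remaining outer problem is then handled by one-variable calculus: with $g(\alpha)=\alpha M-pN\log\alpha$ one has $g'(\alpha)=M-pN/\alpha$ and $g''(\alpha)=pN/\alpha^2>0$, so $g$ is strictly convex on $\mathbb{R}_{>0}$ and, when $M>0$, attains its unique minimum at $\alpha^\star=pN/M$. Unwinding the factorization identifies the MLE as $\bm{\Psi}=A^\top A=\alpha^\star B^\top B$ for any norm-minimizing $B$, and strict convexity gives the asserted uniqueness of $\alpha^\star$.

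The main obstacle is the existence clause ``if they exist'': the construction above produces an actual maximizer only when both infima are attained. The inner infimum $M$ is realized by a genuine $B\in G^\pm_{SL}$ precisely under a closedness/polystability hypothesis on the orbit of $\bm{Y}^W$, which would be supplied through the Kempf--Ness machinery of \hyperref[thm:1]{Theorem \ref{thm:1}}; and the outer minimum is attained at a finite positive $\alpha^\star$ only when $M>0$, i.e.\ when $\bm{Y}^W$ is semistable, for if $M=0$ then $g(\alpha)\to-\infty$ as $\alpha\to\infty$ and no MLE exists. I would therefore state the conclusion conditionally and emphasize that these two requirements---attainment of the inner infimum and positivity of $M$---are exactly what ties the existence and uniqueness of the MLE to the stability notions (semistable, polystable) introduced in Section~2.
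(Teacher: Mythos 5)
Your proposal is correct and follows essentially the same route as the paper's proof: factor $A=\sqrt{\alpha}\,B$ with $B\in G^\pm_{SL}$ using closure under scalar multiples, observe $\ell_c(A^\top A)=pN\log\alpha-\alpha\|B\cdot\bm{Y}^W\|^2$, and decouple into the inner norm minimization over $G^\pm_{SL}$ and the outer one-variable problem in $\alpha$, identifying the MLEs as $\alpha B^\top B$. Your justification of the decoupling (fixing $\alpha$ and interchanging infima, plus strict convexity of $g(\alpha)=\alpha M-pN\log\alpha$) is a slightly cleaner variant of the paper's argument, which instead computes the explicit minimum value $Np[(1-\log(Np))+\log(c)]$ and notes its monotonicity in $c$; the substance is the same.
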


	\begin{proof}
		Maximizing $\ell_{c}(\bm{\Psi})$ over $\mathcal{L}_{G}$ is equivalent to minimizing 
		\[f(A)=-\ell_{c}(A^\top A)= \|A\cdot  \bm{Y}^{W}\|^2 - N \log\begin{vmatrix}A^\top A \end{vmatrix}, \]
		over $A\in G$.
		Since, the group $G\subseteq GL( \mathbb{R}^p)$ is closed under non-zero scalar multiples, so, $A\in G$ can be written as $A=\beta B$, where $\beta \in \mathbb{R}_{>0}$ and $B\in G^\pm_{SL}$. Using $A^\top A =\beta^2B^\top B$, and setting $\alpha=\beta ^2$,
		\begin{align*}
			f(A)&= \| \beta B\cdot  \bm{Y}^{W}\|^2 -  N \log\begin{vmatrix}\beta ^2B^\top B \end{vmatrix}\\
			&=\beta ^2 \|B\cdot  \bm{Y}^{W}\|^2 - N \log\left((\beta^2)^{p} \right)\\
			&=\alpha \|B\cdot  \bm{Y}^{W}\|^2 - pN \log\left(\alpha \right).
		\end{align*}
		
		Now, let $\|B\cdot  \bm{Y}^{W}\|^2=c$, then $f(A)$ becomes the function of single variable $\alpha $. The minimum value of the function $\alpha c-pN \log(\alpha)$ in $\alpha$ is 
		\[Np [(1-\log(Np)) +\log (c)] \] 
		for $c>0$, which increases as $c$ increases, so the minimum value is attained, when $c$ is minimum. Hence, to minimize $f$ with respect to $A\in G$, we first minimize the norm over $G_{SL}^\pm$ and then minimize the univariate function in $\alpha$, that is,
		\begin{equation}
			\inf_{A\in G}f(A)=\inf _{\alpha \in {\mathbb{R}}_{>0}} \left( \alpha \left( 	\inf_{B\in G^\pm_{SL}} \|B\cdot  \bm{Y}^{W}\|^2  \right)  -pN\log \alpha \right).
		\end{equation}
		Furthermore, an MLE is a matrix $\hat{\bm{\Psi}} \in \mathcal{L}_{G}$ that maximizes $\ell_{c}(\bm{\Psi})$. Hence, the MLEs are all the matrices $\bm{\hat{\Psi}}=A^\top A =\alpha B^\top B$, where $A=\sqrt{\alpha}B$, and $B$ and $\alpha$ minimize the inner and outer infima, respectively.
	\end{proof}

	\subsection{Correspondence between stability and maximum likelihood estimation} 
	In this subsection, the correspondence between the stability of data with respect to the group action and existence and uniqueness of the MLE is proved using \hyperref[prop:2]{Theorem \ref{prop:2}}.

	\begin{lemma}
		\label{lem:4}
		Let $G$ be a Zariski closed self-adjoint group, closed under non zero scalar multiples. If there is an element of $G$ with negative determinant, then $G$ contains an orthogonal element of determinant $-1$. Moreover, \hyperref[prop:2]{Theorem \ref{prop:2}} holds for $G^{+}_{SL}$ as well as $G^\pm_{SL}$.
	\end{lemma}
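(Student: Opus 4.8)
The plan is to first establish a polar decomposition \emph{within} $G$ and then read off the determinant information from it. The first assertion produces an orthogonal element of determinant $-1$; the second reduces the inner infimum of Theorem \ref{prop:2} from $G^\pm_{SL}$ to $G^+_{SL}$, and I would deduce it from the first.

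\emph{Existence of an orthogonal element of determinant $-1$.} Suppose $A\in G$ satisfies $\det(A)<0$. Since $G$ is self-adjoint, $A^\top\in G$, and since $G$ is a group, $M:=A^\top A\in G$; this matrix is symmetric positive definite. The crux is to show that its principal square root $M^{1/2}$ again lies in $G$. For this I would use Zariski closedness together with the fact that real powers of a positive definite element of $G$ stay in $G$: writing $M=UDU^\top$ with $D=\operatorname{diag}(d_1,\dots,d_p)$, $d_i>0$, and setting $M^t=UD^tU^\top$, any polynomial $f$ vanishing on $G$ yields an exponential sum $t\mapsto f(M^t)=\sum_j c_j r_j^{\,t}$ with distinct $r_j>0$ that vanishes for every integer $t$ (because $M^n\in G$ and $M^{-n}=(M^{-1})^n\in G$); a Vandermonde argument forces all $c_j=0$, so $f$ vanishes on the whole curve and $M^t\in G$ for all real $t$, in particular $P:=M^{1/2}\in G$. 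Consequently $Q:=AP^{-1}\in G$, and $Q$ is orthogonal since $Q^\top Q=P^{-1}A^\top A P^{-1}=P^{-1}MP^{-1}=\bm{I}_p$. As $\det(P)>0$ and $\det(Q)=\pm1$, the hypothesis $\det(A)=\det(Q)\det(P)<0$ forces $\det(Q)=-1$, giving the desired orthogonal element of determinant $-1$ in $G$.

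\emph{Reduction from $G^\pm_{SL}$ to $G^+_{SL}$.} The key observation is that the quantity in the inner infimum of Theorem \ref{prop:2} depends on $B$ only through $B^\top B$: by \eqref{eq:5}, $\|B\cdot \bm{Y}^{W}\|^2=\sum_{i=1}^{N}\frac{1}{W_i}\operatorname{tr}\bigl(B^\top B\,\bm{Y}_i\bm{Y}_i^\top\bigr)$. If every element of $G$ has positive determinant then $G^\pm_{SL}=G^+_{SL}$ and there is nothing to prove; otherwise let $Q\in G$ be the orthogonal determinant $-1$ element produced above. Since $G^+_{SL}\subseteq G^\pm_{SL}$, one inequality between the infima is automatic. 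For the reverse, take any $B\in G^\pm_{SL}$: if $\det(B)=1$ then $B\in G^+_{SL}$ already, while if $\det(B)=-1$ then $QB\in G$ has $\det(QB)=1$, hence $QB\in G^+_{SL}$, and $(QB)^\top(QB)=B^\top Q^\top QB=B^\top B$ gives $\|(QB)\cdot \bm{Y}^{W}\|^2=\|B\cdot \bm{Y}^{W}\|^2$. Thus every value of the objective on $G^\pm_{SL}$ is already attained on $G^+_{SL}$, so the two infima agree and the corresponding sets of minimizing Gram matrices $B^\top B$ coincide. Substituting this common value into the outer infimum shows that the identity of Theorem \ref{prop:2}, and its description of the MLEs as $\alpha B^\top B$, remain valid with $G^+_{SL}$ in place of $G^\pm_{SL}$.

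The main obstacle is the middle step of the first part, namely verifying that $M^{1/2}=(A^\top A)^{1/2}$ stays inside $G$; everything after it is determinant bookkeeping and the elementary invariance $\|(QB)\cdot \bm{Y}^{W}\|^2=\|B\cdot \bm{Y}^{W}\|^2$. The square-root step is exactly where Zariski closedness is indispensable, and the cleanest route I see is the exponential-sum/Vandermonde argument sketched above; an alternative would be to invoke the standard Cartan decomposition $G=K\exp(\mathfrak{p})$ of a Zariski closed self-adjoint subgroup, from which $P\in G$ is immediate.
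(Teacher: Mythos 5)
Your proposal is correct and follows essentially the same route as the paper: produce an orthogonal element of determinant $-1$ as the orthogonal factor of a polar decomposition taking place inside $G$, then reduce the infimum over $G^\pm_{SL}$ to $G^{+}_{SL}$ by noting that multiplying by this orthogonal element leaves the Gram matrix $B^\top B$, and hence $\|B\cdot \bm{Y}^{W}\|^2$, unchanged. The only difference is that you actually prove the key ingredient --- that the positive definite factor $(A^\top A)^{1/2}$ stays in $G$, via the one-parameter curve $M^t$ and the Vandermonde argument --- whereas the paper simply invokes ``the polar decomposition in $G$'' as a known property of Zariski closed self-adjoint groups, so your write-up is a self-contained version of the same argument.
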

	
	\begin{proof}
		Let $A\in G$ with $\det(A)<0$. Since $G$ is Zariski closed and self-adjoint, then by the polar decomposition in $G$, any $A\in G$ can be written as $A=OP$, where $O\in G$ is an orthogonal matrix and $P\in G$ is positive definite matrix. Then, $\det(A)<0$ and $P\in G$ is positive definite implies that $\det(O)=-1$. 
		
		If there are no matrices in $G$ with negative determinant, then $G^\pm_{SL}=G^{+}_{SL}$, and if there is a matrix $A\in G$ of negative determinant then from above proof $G$ contains an orthogonal matrix $O \in G$ of determinant $-1$. Then, $A\in G$ can be written as $A=\beta OB$, where $\beta \in \mathbb{R}_{>0}$ and $B\in G^{+}_{SL}$, and follow the \hyperref[prop:2]{Theorem \ref{prop:2}} with $\bm{\Psi}=A^\top A=\beta ^{2} B^\top O^\top OB =\beta ^2B^\top B$, then minimizing $\|B\cdot  \bm{Y}^{W}\|$ over $G^\pm_{SL}$ is equivalent to minimizing $\|B\cdot  \bm{Y}^{W}\|$ over $G^{+}_{SL}$. Hence, \hyperref[prop:2]{Theorem \ref{prop:2}} holds for $G^{+}_{SL}$ as well as $G^\pm_{SL}$.
	\end{proof}	
	
	As per the result in \hyperref[lem:4]{Lemma \ref{lem:4}}, $G^{+}_{SL}$ will be used instead of $G^\pm_{SL}$ in further discussion.
	
	\begin{theorem}
		\label{prop:5}
		Let $ \bm{Y}^{W} \in  \mathbb{R}^{pN}$ be as defined in \eqref{eq:7.2}, and let $G\subseteq GL( \mathbb{R}^p)$ be a Zariski closed self-adjoint group which is closed under non-zero scalar multiplication. If $\alpha B^\top B$ is an MLE given $\left(\underline{\bm{Y}},\underline{W}\right)$, with $B\in G^{+}_{SL}$ and $\alpha \in \mathbb{R}_{>0}$, then all MLEs given $\left(\underline{\bm{Y}},\underline{W}\right)$ are of the form $S^\top (\alpha B^\top B)S$, where $S$ is in the $G^{+}_{SL}$-stabilizer of $ \bm{Y}^{W}$. 
	\end{theorem}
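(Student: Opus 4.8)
The plan is to reduce the statement to the norm-minimization picture of Theorem~\ref{prop:2} and then apply the Kempf-Ness theorem to the subgroup $G^{+}_{SL}$. First I would note that, by Theorem~\ref{prop:2} together with Lemma~\ref{lem:4}, every MLE given $\left(\underline{\bm{Y}},\underline{W}\right)$ is of the form $\alpha'(B')^\top B'$ with $B'\in G^{+}_{SL}$ minimizing $\|B'\cdot\bm{Y}^{W}\|$ and $\alpha'$ minimizing the associated outer univariate infimum. Writing $c=\inf_{B\in G^{+}_{SL}}\|B\cdot\bm{Y}^{W}\|^2$, the outer function $\alpha\mapsto \alpha c-pN\log\alpha$ is strictly convex with the single critical point $\alpha=pN/c$; hence every MLE carries the same scalar $\alpha'=\alpha$, and two MLEs can differ only through the choice of norm-minimizing $B'$.

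Next I would verify that $G^{+}_{SL}$ itself meets the hypotheses of Theorem~\ref{thm:1}. Since $G$ is Zariski closed and $\det A=1$ is a polynomial condition, $G^{+}_{SL}=G\cap\{\det=1\}$ is Zariski closed; and since $G$ is self-adjoint and $\det(A^\top)=\det A$, so is $G^{+}_{SL}$. Set $v=\bm{Y}^{W}$ and let $K$ be the orthogonal matrices of $G^{+}_{SL}$. Both $B\cdot v$ and $B'\cdot v$ are of minimal norm in the single orbit $G^{+}_{SL}\cdot v$, so $\|B\cdot v\|=\|B'\cdot v\|$. Applying part (a) of Theorem~\ref{thm:1} to $G^{+}_{SL}$, minimality of $B\cdot v$ gives $\mu(B\cdot v)=0$; then part (b), with $B'\cdot v\in G^{+}_{SL}\cdot(B\cdot v)$ of equal norm, yields $B'\cdot v\in K\cdot(B\cdot v)$. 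Because the action is diagonal, this reads $B'\cdot v=(OB)\cdot v$ for some orthogonal $O\in K$.

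The remaining computation is routine. Setting $S=(OB)^{-1}B'=B^{-1}O^{-1}B'\in G^{+}_{SL}$, the equality $B'\cdot v=(OB)\cdot v$ forces $S\cdot v=v$, so $S$ lies in the $G^{+}_{SL}$-stabilizer of $\bm{Y}^{W}$. Substituting $B'=OBS$ and using $O^\top O=\bm{I}_p$,
\begin{equation*}
\alpha(B')^\top B'=\alpha(OBS)^\top(OBS)=\alpha S^\top B^\top B S=S^\top(\alpha B^\top B)S,
\end{equation*}
which is the asserted form. For completeness I would also record the converse: if $S$ is in the $G^{+}_{SL}$-stabilizer of $\bm{Y}^{W}$, then $BS\in G^{+}_{SL}$ satisfies $(BS)\cdot v=B\cdot v$, so $BS$ is again a norm minimizer and $S^\top(\alpha B^\top B)S=\alpha(BS)^\top(BS)$ is again an MLE.

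The main obstacle I anticipate is justifying the use of Kempf-Ness for the restricted group $G^{+}_{SL}$ rather than for $G$: one must confirm that minimizing $\|B\cdot\bm{Y}^{W}\|$ over $B\in G^{+}_{SL}$ is the same as selecting a minimal-norm representative of the $G^{+}_{SL}$-orbit, and that the compact subgroup $K$ and moment map $\mu$ in part (b) are precisely those attached to $G^{+}_{SL}$. Once this bookkeeping is settled, parts (a) and (b) of Theorem~\ref{thm:1} supply the relation $B'=OBS$, and the uniqueness of $\alpha$ ensures the scalar factor never distinguishes two MLEs.
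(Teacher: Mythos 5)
Your proposal is correct and follows essentially the same route as the paper's proof: both directions are handled identically, with the stabilizer element $S$ giving $BS$ as another norm minimizer for one direction, and Kempf--Ness parts (a) and (b) applied to $G^{+}_{SL}$ producing the factorization $B'=OBS$ (hence $\alpha(B')^\top B'=S^\top(\alpha B^\top B)S$) for the converse. Your added bookkeeping --- checking that $G^{+}_{SL}$ is Zariski closed and self-adjoint, and noting that the strictly convex outer function forces every MLE to carry the same scalar $\alpha=pN/c$ --- fills in details the paper leaves implicit, but does not change the argument.
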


	\begin{proof}
		From  \hyperref[prop:2]{Theorem \ref{prop:2}}, $\alpha B^\top B$ is an MLE given $ \bm{Y}^{W}$, where $B\in G^{+}_{SL}$ and $\alpha \in \mathbb{R}_{>0}$ satisfies the equation 
		\begin{equation*}
			\inf _{\alpha \in {\mathbb{R}}_{>0}} \left( \alpha \left( 	\inf_{B\in G^{+}_{SL}} \| B\cdot  \bm{Y}^{W}\|^2  \right)  -pN\log \alpha \right),
		\end{equation*}
		that is, $B\in G^{+}_{SL}$ minimize the norm of $ \bm{Y}^{W}$ under the action of $G^{+}_{SL}$.
		
		Consider the $G^{+}_{SL}$-stabilizer of $ \bm{Y}^{W}$ $T_{\bm{Y}^{W}}= \{S\in G^{+}_{SL} \mid S\cdot  \bm{Y}^{W}= \bm{Y}^{W} \}$. Thus, $BS$ minimize the norm of $ \bm{Y}^{W}$ under the action of $G^{+}_{SL}$, for any $S$ is in the $G^{+}_{SL}$-stabilizer of $ \bm{Y}^{W}$, because
		\[ \inf_{\tilde{B}\in G^{+}_{SL}}\|\tilde{B} \cdot  \bm{Y}^{W}\|^2 =\|B\cdot  \bm{Y}^{W}\|^2 = \|B\cdot (S\cdot  \bm{Y}^{W})\|^2=\|(BS)\cdot  \bm{Y}^{W}\|^2. \]
		Therefore, $\alpha (BS)^\top BS=S^\top (\alpha B^\top B)S$ is also a MLE given $ \bm{Y}^{W}$. 
		
		Conversely, by \hyperref[prop:2]{Theorem \ref{prop:2}}, any MLE of the form $\alpha (B')^\top B'$ with $B' \in G^{+}_{SL} $ such that
		\[ \|B' \cdot  \bm{Y}^{W}\|^2 =\inf_{\tilde{B}\in G^{+}_{SL}}\|\tilde{B} \cdot  \bm{Y}^{W}\|^2 =\|B\cdot  \bm{Y}^{W}\|^2. \]
		Since, $B\cdot  \bm{Y}^{W}$ is of minimal norm in its orbit and $\|B' \cdot  \bm{Y}^{W}\|^2 =\|B\cdot  \bm{Y}^{W}\|^2 $, therefore by (a) and (b) of the Kempf-Ness \hyperref[thm:1]{Theorem \ref{thm:1}}, there exists an orthogonal matrix $O\in G^{+}_{SL}$ with $B'\cdot  \bm{Y} ^{W}= O\cdot(B\cdot  \bm{Y}^{W})$ implies that $S= B^{-1}O^{-1}B'$ is in the $G^{+}_{SL}$-stabilizer of $ \bm{Y}^{W}$, and $B'=OBS$, and deduce that the MLE $\alpha (B')^\top B'= S^\top (\alpha B^\top B)S$. 
	\end{proof}

	\begin{theorem}
		\label{thm:6 }
		Let $ \bm{Y}^{W} \in  \mathbb{R}^{pN}$ be as defined in \eqref{eq:7.2} and a group $G\subseteq GL(\mathbb{R}^p)$ be a Zariski closed self-adjoint group which is closed under non-zero scalar multiples. The stability under the action of $G^{+}_{SL}$ on $ \mathbb{R}^{pN}$ is related to maximum likelihood estimation of the multivariate symmetric Laplace distribution over $\mathcal{L}_{G}$ as follows:
		
		\renewcommand{\labelenumi}{(\alph{enumi})}
		\begin{enumerate}
			\item $ \bm{Y}^{W}$ is unstable $\iff$ $\ell_{c}(\bm{\Psi})$ is not bounded from above,
			\item $ \bm{Y}^{W}$ is semistable $\iff$ $\ell_{c}(\bm{\Psi})$ is bounded from above,
			\item $ \bm{Y}^{W}$ is polystable $\iff$ MLE exists,
			\item  $ \bm{Y}^{W}$ is stable $\implies$ finitely many MLEs exists $\iff$ unique MLE exists.
		\end{enumerate}
	\end{theorem}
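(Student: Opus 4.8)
The plan is to route all four equivalences through the reformulation established in \hyperref[prop:2]{Theorem \ref{prop:2}}: maximizing $\ell_{c}$ over $\mathcal{L}_{G}$ is the same as minimizing $f(A)=\|A\cdot\bm{Y}^{W}\|^{2}-N\log|A^\top A|$ over $A\in G$, and this splits as $\inf_{A\in G}f(A)=\inf_{\alpha>0}\left(\alpha c-pN\log\alpha\right)$, where I write $c:=\inf_{B\in G^{+}_{SL}}\|B\cdot\bm{Y}^{W}\|^{2}$ for the minimal squared norm along the $G^{+}_{SL}$-orbit. By \hyperref[lem:4]{Lemma \ref{lem:4}} it is legitimate to work with $G^{+}_{SL}$, and since $G^{+}_{SL}$ is again Zariski closed and self-adjoint, the Kempf--Ness \hyperref[thm:1]{Theorem \ref{thm:1}} applies to it, with $K$ the group of orthogonal elements of $G^{+}_{SL}$. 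The whole proof then reduces to reading off how three quantities---whether $c=0$, whether $c$ is attained, and the size of the stabilizer---control boundedness, existence, and uniqueness of the MLE.

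For (a) and (b) I would analyze only the univariate outer problem. If $c=0$ then $\alpha c-pN\log\alpha=-pN\log\alpha\to-\infty$ as $\alpha\to\infty$, so $\inf_{A}f=-\infty$ and $\sup\ell_{c}=+\infty$; conversely $\sup\ell_{c}=+\infty$ forces $c=0$. If $c>0$ the outer infimum is attained at $\alpha=pN/c$ with finite value $pN\bigl[(1-\log(pN))+\log c\bigr]$, so $\ell_{c}$ is bounded above. Since $\bm{Y}^{W}$ is unstable precisely when $c=0$ and semistable precisely when $c>0$ (by the definition of stability applied to the $G^{+}_{SL}$-action), this gives (a) and (b), which are in any case complementary.

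For (c) I would first characterize existence of an MLE as: $c>0$ and the infimum $c$ is attained by some $B\in G^{+}_{SL}$. Indeed, if $c=0$ the likelihood is unbounded and no MLE exists; if $c>0$ but the infimum is unattained, $\sup\ell_{c}$ is finite yet unattained; and if $c>0$ with a genuine norm-minimizer $B$, the outer minimizer $\alpha=pN/c$ produces the MLE $\alpha B^\top B$. Now attainment of $c$ at $B$ means $B\cdot\bm{Y}^{W}$ has minimal norm in its orbit, equivalently $\mu(B\cdot\bm{Y}^{W})=0$ by part (a) of Kempf--Ness, while $c>0$ means this vector is nonzero. Thus an MLE exists iff there is $0\neq w\in G^{+}_{SL}\cdot\bm{Y}^{W}$ with $\mu(w)=0$, which by part (e) of the Kempf--Ness \hyperref[thm:1]{Theorem \ref{thm:1}} is exactly polystability of $\bm{Y}^{W}$; parts (a) and (d) ensure the correspondence is reversible and that the minimizing orbit is closed.

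For (d), the forward implication is immediate from \hyperref[prop:5]{Theorem \ref{prop:5}}: when $\bm{Y}^{W}$ is stable the $G^{+}_{SL}$-stabilizer $T_{\bm{Y}^{W}}$ is finite, and since every MLE has the form $S^\top(\alpha B^\top B)S$ with $S\in T_{\bm{Y}^{W}}$, there can be only finitely many. The substantive part is ``finitely many $\iff$ unique,'' whose nontrivial direction I would prove by geodesic convexity. Using the polar decomposition in $G$ already invoked in \hyperref[lem:4]{Lemma \ref{lem:4}}, one checks that $\mathcal{L}_{G}=\{A^\top A\mid A\in G\}=G\cap PD_{p}(\mathbb{R})$, which is a totally geodesic submanifold of $PD_{p}(\mathbb{R})$ for the affine-invariant metric; moreover $\ell_{c}(\bm{\Psi})=N\log|\bm{\Psi}|-\sum_{i}\tfrac{1}{W_{i}}\bm{Y}_{i}^\top\bm{\Psi}\bm{Y}_{i}$ is geodesically concave, since $\log|\bm{\Psi}|$ is geodesically affine and each $\bm{\Psi}\mapsto\bm{Y}_{i}^\top\bm{\Psi}\bm{Y}_{i}$ is geodesically convex. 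Hence the set of maximizers of $\ell_{c}$ on $\mathcal{L}_{G}$ is geodesically convex, so if it contained two distinct points the entire connecting geodesic would consist of MLEs, contradicting finiteness; thus finitely many MLEs forces a unique one, and the converse is trivial. I expect this last step to be the main obstacle, precisely because it requires the geometric input (total geodesity of $\mathcal{L}_{G}$ and geodesic concavity of $\ell_{c}$) lying outside the purely orbit-theoretic machinery used for (a)--(c), rather than any formula manipulation.
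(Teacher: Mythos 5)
Your proposal is correct, and for parts (a), (b), (c) and the implication ``stable $\implies$ finitely many MLEs'' it follows essentially the same path as the paper: everything is routed through Theorem \ref{prop:2}, Lemma \ref{lem:4}, Theorem \ref{prop:5} and the Kempf--Ness Theorem \ref{thm:1} (your use of part (e) of Kempf--Ness in (c) is a slightly cleaner packaging of the paper's use of parts (a) and (d), but it is the same argument). The genuine divergence is the step ``finitely many MLEs $\iff$ unique MLE.'' The paper stays inside the group-action machinery: it first proves the transformation rule relating $T_{\bm{Y}^{W}}$ to $T_{B\cdot\bm{Y}^{W}}$ and the MLEs given $\bm{Y}^{W}$ to those given $B\cdot\bm{Y}^{W}$, reduces to the case where $\bm{Y}^{W}$ has minimal norm in its orbit so that all MLEs are $\alpha S^\top S$ with $S\in T_{\bm{Y}^{W}}$, and then shows the set $\{S^\top S \mid S\in T_{\bm{Y}^{W}}\}$ is either $\{\bm{I}\}$ or infinite, because a non-orthogonal element $K$ of the self-adjoint group $T_{\bm{Y}^{W}}$ yields powers $(K^\top K)^{n}$ with an eigenvalue $\lambda_{1}^{n}\neq 1$, hence infinitely many distinct MLEs. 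You instead argue geometrically: $\mathcal{L}_{G}=G\cap PD_{p}(\mathbb{R})$ is totally geodesic for the affine-invariant metric, $\ell_{c}$ is geodesically concave on it, so the maximizer set is geodesically convex and is therefore a single point or a continuum. Both arguments are sound. Yours buys a stronger structural conclusion (the MLE set is connected, not merely ``one or infinitely many'') and avoids the reduction-to-minimal-norm bookkeeping; its cost is the extra geometric input, above all the fact that a Zariski closed self-adjoint group contains every real power $P^{t}$ of each of its positive definite elements $P$ --- this is exactly what makes $\mathcal{L}_{G}=G\cap PD_{p}(\mathbb{R})$, total geodesity, and hence your convexity argument work. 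That fact is standard (a polynomial vanishing on $G$ restricts to an exponential polynomial in $t$ vanishing on all integers, hence identically), but it is nowhere stated or proved in the paper, so you would need to supply it to make your route self-contained. The paper's eigenvalue-power argument needs none of this, and it also isolates the mechanism of non-uniqueness (non-orthogonal stabilizer elements), which is precisely what is exploited in the example following the theorem and in the complex case, Theorem \ref{thm:10}.
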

	
	\begin{proof}
		If $\bm{Y}^{W}$ is unstable, then $c= \inf_{B\in G^{+}_{SL}} \|B\cdot  \bm{Y}^{W}\|^2 =0$. Hence, from \hyperref[prop:2]{Theorem \ref{prop:2}}, 
		\[ \inf_{A\in G}(f(A))= \inf _{\alpha \in {\mathbb{R}}_{>0}} \left(0 -pN\log \alpha \right), \]
		the outer infima tends to $-\infty$, so the maximum of $\ell_{c}(\bm{\Psi})$ tends to $\infty$. Thus, $\ell_{c}(\bm{\Psi})$ is not bounded from above. If $ \bm{Y}^{W}$ is not unstable, that is, $ \bm{Y}^{W}$ is semistable. Conversely, suppose that  $ \bm{Y}^{W}$ is semistable, then $c= \inf_{B\in G^{+}_{SL}} \|B\cdot  \bm{Y}^{W}\|^2 >0$ and thus, by \hyperref[prop:2]{Theorem \ref{prop:2}}, 
		\[ \inf_{A\in G}(f(A))= \inf _{\alpha \in {\mathbb{R}}_{>0}} \left(\alpha c -pN\log \alpha \right), \]
		the outer infima is some real number, and hence, $\ell_{c}(\bm{\Psi})$ is bounded from above. This proves parts (a) and (b). 
		
		If $ \bm{Y}^{W}$ is polystable, then $c= \inf_{B\in ^{+}_{SL}} \|B\cdot  \bm{Y}^{W}\|^2 >0$ is attained for some $B\in G^{+}_{SL}$ and $\alpha B^\top B$ is an MLE, where $\alpha \in \mathbb{R}_{>0}$ minimize the outer infimum. Conversely,	if an MLE given $ \bm{Y}^{W}$ exists, then $\ell_{c}(\bm{\Psi})$ is bounded from above and attains its maximum. Hence, the double infimum from \hyperref[prop:2]{Theorem \ref{prop:2}} is attained, and there exists $B\in G^{+}_{SL}$ such that $B\cdot  \bm{Y}^{W}$ has minimal norm in its orbit under $G^{+}_{SL}$, thus by the Kempf-Ness \hyperref[thm:1]{Theorem \ref{thm:1}} (a) and (d), the orbit $G\cdot \bm{Y}^{W}$ is closed. Hence, $ \bm{Y}^{W}$ is polystable.
		
		If $ \bm{Y}^{W}$ is stable, then its stabilizer $T_{ \bm{Y}^{W}}$ is finite, then by \hyperref[prop:5]{Theorem \ref{prop:5}}, there are only finitely many MLEs given $ \bm{Y}^{W}$. It remains to show that a tuple $ \bm{Y}^{W}$ can't have finitely many MLEs unless its unique.
		Since, there are finitely many MLEs given $ \bm{Y}^{W}$, so by part (c), $ \bm{Y}^{W}$ is polystable. Next, we relate the $T_{ \bm{Y}^{W}}$ to $T_{B\cdot \bm{Y}^{W}}$.
		\[T_{ \bm{Y}^{W}}=\{S\in G^{+}_{SL} \mid S\cdot  \bm{Y}^{W}= \bm{Y}^{W} \}\]
		\[T_{B\cdot  \bm{Y}^{W}}=\{S'\in G^{+}_{SL} \mid S'\cdot B\cdot  \bm{Y}^{W}=B\cdot  \bm{Y}^{W} \}\]
		
		We claim that $T_{ \bm{Y}^{W}}= B^{-1}T_{B\cdot  \bm{Y}^{W}} B$.
		
		Let $S_{1}\in T_{ \bm{Y}^{W}} \implies S_{1}\cdot \bm{Y}^{W}=\bm{Y}^{W}$.
		
		Now, $S_{1}\cdot (B^{-1}B\cdot \bm{Y}^{W})= B^{-1}B\cdot \bm{Y}^{W} \implies BS_{1}B^{-1}\cdot (B\cdot \bm{Y}^{W})= B\cdot \bm{Y}^{W} \implies  BS_{1}B^{-1} \in T_{B\cdot \bm{Y}^{W}}$. Hence, $S_{1}\in B^{-1} T_{\bm{Y}^{W}}B$, that is, 
		$T_{ \bm{Y}^{W}} \subseteq B^{-1}T_{B\cdot  \bm{Y}^{W}} B.$ 
		
		Now, let $S_{2} \in  B^{-1}T_{B\cdot  \bm{Y}^{W}} B \implies S_{2}= B^{-1}S_{3} B$ for some $S_{3}\in  T_{B\cdot  \bm{Y}^{W}} $, and $B S_{2}=S_{3} B\implies B S_{2} \cdot \bm{Y}^{W}=S_{3} B \cdot \bm{Y}^{W} \implies B S_{2} \cdot \bm{Y}^{W}=B \cdot \bm{Y}^{W}\implies S_{2} \cdot \bm{Y}^{W}=  \bm{Y}^{W}$. Hence, $S_{2} \in T_{\bm{Y}^{W}}$, that is, $B^{-1}T_{B\cdot \bm{Y}^{W}} B \subseteq T_{\bm{Y}^{W}}$. Thus, $T_{ \bm{Y}^{W}}= B^{-1} T_{B\cdot  \bm{Y}^{W}} B$.
		
		Let $\alpha A^\top A$ is an MLE given $\bm{Y}^{W}$, then by \hyperref[prop:5]{Theorem \ref{prop:5}}, all MLEs given $\bm{Y}^{W}$ are ${S_{1}}^\top \alpha A^\top A S_{1}$ with $S_{1}\in T_{\bm{Y}^{W}}.$ Now, using the equality $T_{ \bm{Y}^{W}}= B^{-1}T_{B\cdot  \bm{Y}^{W}} B$, since $S_{1}\in T_{\bm{Y}^{W}}$, then there exist $S_{2}\in T_{B \cdot \bm{Y}^{W}}$ such that $S_{1}=B^{-1}S_{2}B$.
		Now all the MLEs given $\bm{Y}^{W}$ are 
		\begin{align*}
			{S_{1}}^\top \alpha A^\top A S_{1} &= \left(B^{-1}S_{2} B \right)^\top  \left( \alpha A^\top A\right) \left(B^{-1}S_{2}B\right) \\
			&= B^\top {S_{2}}^\top (B^{-1})^\top  \left( \alpha A^\top A\right) \left(B^{-1}S_{2}B\right) \\
			&= B^\top {S_{2}}^\top \left( \alpha \left(AB^{-1}\right)^\top  \left(AB^{-1} \right) \right) S_{2} B.
		\end{align*}
		Since, $S_{2}\in T_{B \cdot \bm{Y}^{W}} \implies {S_{2}}^\top \left( \alpha \left(AB^{-1}\right)^\top (AB^{-1})\right) S_{2}$ are all the MLEs given $B\cdot \bm{Y}^{W}$ (because if $\alpha A^\top A$ is an MLE given $\bm{Y}^{W}$, then $\alpha \left(AB^{-1}\right)^\top  \left(AB^{-1}\right)$ is an MLE given $B\cdot \bm{Y}^{W}$). Hence,                                                               
		\[\{\text{MLEs given}\hspace{.7mm} \bm{Y}^{W}\}={B}^\top \{\text{MLEs given}\hspace{.7mm} B\cdot   \bm{Y}^{W}\}B.\]                                                                                                                                                        	                                                                                                                                                                     
		To study the stabilizer and MLE of polystable $ \bm{Y}^{W}$, without loss of generality we can assume that $ \bm{Y}^{W}$ is of minimal norm in its orbit. So, one of the MLEs given $ \bm{Y}^{W}$ is $\alpha  \bm{I}$, where $\alpha \in \mathbb{R}_{>0}$ minimize the outer infimum in \hyperref[prop:2]{Theorem \ref{prop:2}} and $ \bm{I}$ is the identity matrix of order $p$.
		
		Next, we show that the set $\{S^\top S\mid S\in T_{ \bm{Y}^{W}}\}$ is either singleton having only identity matrix or an infinite set, that is, $ \bm{Y}^{W}$ has either a unique MLE or infinitely many MLEs because the MLEs given $ \bm{Y}^{W}$ are the matrices $S^\top (\alpha \bm{I}^\top  \bm{I})S=\alpha S^\top S$, where $S\in T_{ \bm{Y}^{W}}$. The subgroup $T_{ \bm{Y}^{W}}$ is self-adjoint. If $T_{ \bm{Y}^{W}}$ is contained into the set of orthogonal matrices, then $\{S^\top S\mid S\in T_{ \bm{Y}^{W}}\}=\{ \bm{I}\}$. If not, that is, there is a matrix $K\in T_{ \bm{Y}^{W}}$ such that $K$ is non-orthogonal matrix. Since, $ T_{ \bm{Y}^{W}}$ is a self-adjoint group, $K^\top \in T_{ \bm{Y}^{W}}$ and $K^\top K\in  T_{ \bm{Y}^{W}}$, and $K^\top K \neq  \bm{I}$, so $K^\top K$ has at least one eigenvalue $\lambda_{1}$ other than one, that is, $\lambda_{1} \ne 1$. Since $T_{\bm{Y}^{W}}$ is a subgroup, so $(K)^{n}, \left(K^\top\right)^{n} \in T_{\bm{Y}^{W}}$, and $\left(K^\top K\right)^n \in  T_{ \bm{Y}^{W}}$ for some positive integer $n$, and at least one eigenvalue of $\left(K^\top K\right)^n$ is ${\lambda_{1}}^n \ne 1$. Thus, the set $\{ \left( K^{n}\right)^\top (K)^n= \left(K^\top K\right)^n \mid  (K)^n  \in  T_{ \bm{Y}^{W}} \}\subseteq \{S^\top S\mid S\in T_{ \bm{Y}^{W}}\}$ is infinite. Hence, $\{S^\top S\mid S\in T_{ \bm{Y}^{W}}\}$ is infinite.
	\end{proof}
	
	In the next example, the importance of the assumption that the group is self adjoint is addressed, that is, the condition (d) of \hyperref[thm:6 ]{Theorem \ref{thm:6 }}, which says that, finitely many MLEs exists $\implies$ unique MLE exist, is not true if the group $G$ is not self adjoint .
	
	\begin{example}
		Let the group $G$ is
		\[G= \bigcup_{\lambda \in \bm{R}\setminus \{0\}} \{ \lambda \bm{I}_{2}, \lambda \bm{S}\},\]
		where $\bm{S}=\begin{bmatrix}
			1 &-1\\0 &-1
		\end{bmatrix}$ and $\bm{I}_{2}$ is an identity matrix of order $2$.
		
		Consider the random sample $\left(\underline{\bm{Y}}, \underline{W}\right) =\left( \bm{Y}_{1}, W_{1} \right)=\left( \begin{pmatrix}
			2\\0
		\end{pmatrix},1 \right)$, then $\bm{Y}^{W}=\left( \frac{1}{\sqrt{W_{1}}} \bm{Y}_{1}\right)= \begin{pmatrix}
			2\\0
		\end{pmatrix} \in \mathbb{R}^{2}$. The subgroup $G^\pm_{SL}=\{ \pm \bm{I}_{2}, \pm \bm{S}\}$, and the MLEs given $\bm{Y}^{W}$ are $\alpha B^\top B$, where $\alpha >0$ is the unique value which minimizes the outer infimum and $B\in G^\pm_{SL}$ minimize the norm $\|A\cdot \bm{Y}^{W}\|$, given in \hyperref[prop:2]{Theorem \ref{prop:2}}. 
		
		Now, $\|\bm{Y}^{W}\|^{2} =\|\bm{S} \cdot \bm{Y}^{W}\|^{2}$, and all elements in the subgroup $G^\pm_{SL}$ minimizes the norm $\|A\cdot \bm{Y}^{W}\|$. Therefore, there are exactly two distinct MLEs $\alpha {\bm{I}_{2}}^\top \bm{I}_{2}$ and $\alpha {\bm{S}}^\top \bm{S}$ given the data $\bm{Y}^{W}$.
	\end{example} 
	
	\subsection{Complex Laplace group model} The complex Laplace group model is constructed using the representations $G \to GL( \mathbb{C}^p)$ of a group $G$ on a vector space $ \mathbb{C}^p$.  This construction depends on the image of the group $G$ in $GL( \mathbb{C}^p)$, so the group $G$ can be viewed as a subgroup of $GL( \mathbb{C}^p)$. The Laplace group model given by $G$ is the set of multivariate symmetric Laplace distributions with $\bm{\Psi}=\bm{\Sigma}^{-1}\in PD_{p}(\mathbb{R})$, and $\bm{\Psi} \in \mathcal{L}_{G}$, where
	\[\mathcal{L}_{G}= \{A^* A \mid A\in G\}.\]
	Then, the log-likelihood function (up to an additive constant) becomes
	\begin{equation*}
		\ell_{c}(\bm{\Psi})= N \log \begin{vmatrix}\bm{\Psi} \end{vmatrix} -\sum_{i=1}^{N}  \frac{1}{|W_{i}|} \operatorname{tr}\left(\bm{\Psi}  \bm{Y}_{i}{ \bm{Y}_{i}}^{*} \right). 
	\end{equation*}  
	
	For a random sample $(\bm{Y}_{1},\bm{Y}_{2},\cdots,\bm{Y}_{N},W_{1},W_{2},\cdots,W_{N})$ from the joint distribution of $\bm{Y}$ and $W$, let 
	\begin{equation}
		\label{eq:7.3}
		\bm{Y}^{W}=\left(\frac{1}{\sqrt{W_{1}}} \bm{Y}_{1},\frac{1}{\sqrt{W_{2}}}  \bm{Y}_{2},\cdots, \frac{1}{\sqrt{W_{N}}} \bm{Y}_{N}\right)\in  \mathbb{C}^{pN}.
	\end{equation}
	Then, the action of the group $G\subseteq GL(\mathbb{C}^p)$ on $ \mathbb{C}^{pN}$ is defined $\forall A\in G$ and $\bm{Y}^{W}\in \mathbb{R}^{pN}$ as:	
	\[A. \bm{Y}^{W}=\left(\frac{1}{\sqrt{W_{1}}}A\bm{Y}_{1},\frac{1}{\sqrt{W_{2}}}A \bm{Y}_{2},\cdots,\frac{1}{\sqrt{W_{N}}} A\bm{Y}_{N}\right).\]
	(If $G$ acts on $ \mathbb{C}^{pN}$, then the group $G$ is diagonally embedded in $GL( \mathbb{C}^{pN})$ ). The \hyperref[rem:7.1]{Remark \ref{rem:7.1}} applies here as well.
	
	The norm can be written as 
	\begin{equation}
		\|A\cdot  \bm{Y}^{W}\|^2 =\sum_{i=1}^{N} \left(\frac{1}{\sqrt{W_{i}}} A \bm{Y}_{i} \right)^{*} \left(\frac{1}{\sqrt{W_{i}}} A \bm{Y}_{i} \right)	=\sum_{i=1}^{N}\frac{1}{|W_{i}|} \operatorname{tr} \left(A^{*} A  \bm{Y}_{i} \bm{Y}_{i}^{*} \right).
	\end{equation}
	
	Hence, maximizing the log-likelihood over $\mathcal{L}_{G}=\{ \bm{\Psi}=A^{*} A \mid A\in G\}$ is equivalent to minimizing
	\[ -\ell_{c}(\bm{\Psi})=-\ell_{c}(A^{*} A)=- N \log\begin{vmatrix} A^{*}A \end{vmatrix}+\|A\cdot  \bm{Y}^{W}\|^2 \]
	over $A\in G$.
	
	The minimization can be done into two steps as given in \hyperref[prop:2]{Theorem \ref{prop:2}}, but here we are working over complex field, so the norm minimization can be done over the subgroup $G^{+}_{SL} \subseteq G$ of matrices with determinant one, instead of $G^\pm_{SL}$, without the extra condition on $G$, that is, $G$ contains an orthogonal matrix of determinant $-1$, given in \hyperref[lem:4]{Lemma \ref{lem:4}}.
	
	\begin{theorem}
		\label{prop:8}
		Let $ \bm{Y}^{W}=\left(\frac{1}{\sqrt{W_{1}}} \bm{Y}_{1},\frac{1}{\sqrt{W_{2}}}  \bm{Y}_{2},\cdots, \frac{1}{\sqrt{W_{N}}} \bm{Y}_{N}\right) \in  \mathbb{C}^{pN}$ be as defined in \eqref{eq:7.3}. If the group $G\subseteq GL( \mathbb{C}^p)$ is closed under non-zero complex scalar multiplication, then
		\begin{equation}
			\sup_{A\in G} \left(\ell_{c}(A^{*} A) \right) \equiv
			\inf _{\alpha\in {\mathbb{R}}_{>0}} \left( \alpha \left( \inf_{B\in  G^{+}_{SL}} \| B\cdot  \bm{Y}^{W}\|^2  \right)  -pN\log \alpha \right).
		\end{equation}
		The MLEs, if they exists are the matrices $\alpha B^* B$, where $B$ minimizes the norm $\|B\cdot \bm{Y}^{W}\|$ under the action of $G^{+}_{SL}$ on $ \mathbb{C}^{pN}$ and $\alpha \in {\mathbb{R}}_{>0}$ is the unique value minimizing the outer infimum. 
	\end{theorem}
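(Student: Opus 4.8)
The plan is to follow the same two-step strategy used in the real case of \hyperref[prop:2]{Theorem \ref{prop:2}}, replacing the transpose by the conjugate transpose throughout and exploiting that $\mathbb{C}$ is algebraically closed. As recorded just above the statement, maximizing $\ell_{c}(\bm{\Psi})$ over $\mathcal{L}_{G}=\{A^{*}A\mid A\in G\}$ is equivalent to minimizing
\[
f(A)=\|A\cdot\bm{Y}^{W}\|^2-N\log\begin{vmatrix}A^{*}A\end{vmatrix}
\]
over $A\in G$. First I would note that $A^{*}A$ is Hermitian positive definite, so $\begin{vmatrix}A^{*}A\end{vmatrix}$ is real and positive and in fact equals $|\det A|^2$; since $\bm{Y}_{i}\bm{Y}_{i}^{*}$ is Hermitian as well, $f$ is a real-valued function on $G$.

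The key step is the factorization $A=\beta B$ with $B\in G^{+}_{SL}$. Since $G$ is closed under non-zero complex scalar multiplication and $\mathbb{C}$ is algebraically closed, I can choose $\beta\in\mathbb{C}\setminus\{0\}$ to be any $p$-th root of $\det A$; then $B=\beta^{-1}A\in G$ has $\det B=1$, so $B\in G^{+}_{SL}$. This is exactly the point at which the complex setting simplifies the real one: over $\mathbb{R}$ the scalar cannot always be chosen to force the determinant to be $+1$ (the sign obstruction handled in \hyperref[lem:4]{Lemma \ref{lem:4}}), whereas over $\mathbb{C}$ the root always exists, so $G^{+}_{SL}$ can be used directly without the extra hypothesis that $G$ contain an orthogonal element of determinant $-1$. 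Substituting, $A^{*}A=|\beta|^2B^{*}B$, so setting $\alpha=|\beta|^2\in\mathbb{R}_{>0}$ gives $\|A\cdot\bm{Y}^{W}\|^2=\alpha\|B\cdot\bm{Y}^{W}\|^2$ and $\begin{vmatrix}A^{*}A\end{vmatrix}=|\det A|^2=|\beta|^{2p}=\alpha^{p}$, whence
\[
f(A)=\alpha\|B\cdot\bm{Y}^{W}\|^2-pN\log\alpha.
\]

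With $c=\|B\cdot\bm{Y}^{W}\|^2$ fixed, the remaining problem is the univariate minimization of $\alpha\mapsto\alpha c-pN\log\alpha$ on $\mathbb{R}_{>0}$, which is strictly convex with unique minimizer $\alpha=Np/c$ and minimum value $Np[(1-\log(Np))+\log c]$, strictly increasing in $c$. Therefore the joint infimum over $A\in G$ decouples exactly as in the real case: one first minimizes the norm $\|B\cdot\bm{Y}^{W}\|^2$ over $B\in G^{+}_{SL}$ and then solves the univariate problem in $\alpha$, yielding the displayed equivalence and identifying the MLEs as the matrices $\alpha B^{*}B$ with $B$ a norm-minimizer and $\alpha$ the unique outer minimizer.

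The computations are routine once the factorization is set up, so the only genuine obstacle is the bookkeeping around the complex scalar: one must be careful that it is $\alpha=|\beta|^2$ (and not $\beta^2$) that appears, that $\begin{vmatrix}A^{*}A\end{vmatrix}=\alpha^{p}$ follows from $|\det B|=1$, and that $\alpha$ genuinely ranges over all of $\mathbb{R}_{>0}$ as $\beta$ ranges over $\mathbb{C}\setminus\{0\}$, so that the outer infimum is taken over the correct set. Confirming that $f$ remains real-valued throughout then closes the argument.
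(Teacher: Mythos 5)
Your proposal is correct and follows essentially the same route as the paper: factor $A=\beta B$ with $\beta\in\mathbb{C}\setminus\{0\}$ and $B\in G^{+}_{SL}$, set $\alpha=|\beta|^2$ to get $f(A)=\alpha\|B\cdot\bm{Y}^{W}\|^2-pN\log\alpha$, and then decouple the minimization exactly as in the real case of Theorem~\ref{prop:2}. In fact you supply slightly more detail than the paper does, by explicitly choosing $\beta$ as a $p$-th root of $\det A$ to justify the factorization (the paper simply asserts it) and by spelling out the univariate minimization that the paper cites from the earlier proof.
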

	
	\begin{proof}
		Maximizing $\ell_{c}(\bm{\Psi})$ over $\mathcal{L}_{G}$ is equivalent to minimizing 
		\[f(A)= \|A\cdot  \bm{Y}^{W}\|^2 - N \log\begin{vmatrix} A^{*} A \end{vmatrix}, \]
		over $A\in G$.
		Since, the group $G$ is closed under non-zero complex scalar multiplication, so, $A\in G$ can be written as $A=\beta B$, where $\beta \in \mathbb{C}\setminus \{0\}$ and $B\in G^{+}_{SL}$. Using $A^{*} A= |\beta|^2 B^{*} B$, and setting $\alpha=|\beta|^2$,
		\begin{align*}
			f(A)&= \| \beta B\cdot  \bm{Y}^{W}\|^2 -  N \log\begin{vmatrix} |\beta| ^2B^{*} B \end{vmatrix}\\
			&=|\beta| ^2 \|B\cdot  \bm{Y}^{W}\|^2 - N\log\left((|\beta|^2)^{p} \right)\\
			&=\alpha \|B\cdot  \bm{Y}^{W}\|^2 - pN \log\left(\alpha \right)
		\end{align*}
		
		Now, following the proof of \hyperref[prop:2]{Theorem \ref{prop:2}}, 
		\begin{equation}
			\inf_{A\in G}f(A)=\inf _{\alpha \in {\mathbb{R}}_{>0}} \left( \alpha \left(	\inf_{B\in G^{+}_{SL}} \|B\cdot  \bm{Y}^{W}\|^2  \right)  -pN\log \alpha \right).
		\end{equation}
		Hence, the MLEs are all the matrices $\bm{\hat{\Psi}}=A^{*} A=\alpha B^{*}B$, where $B$ and $\alpha$ minimize the inner and outer infima, respectively.
	\end{proof}
	
	The difference between working with real Laplace models and complex Laplace models is that the converse part of \hyperref[thm:6 ]{Theorem \ref{thm:6 }} (d) is true over $\mathbb{C}$, which is not true over $\mathbb{R}$. First the analogue of \hyperref[prop:5]{Theorem \ref{prop:5}} is given for complex Laplace models.
	
	\begin{theorem}
		\label{prop:9}
		Let $ \bm{Y}^{W} \in  \mathbb{C}^{pN}$ be as defined in \eqref{eq:7.3}, and let $G\subseteq GL( \mathbb{C}^p)$ be a Zariski closed self-adjoint group which is closed under non-zero complex scalar multiplication. If $\alpha B^{*} B$ is an MLE given $ \left(\underline{\bm{Y}},\underline{W}\right)$, with $B\in G^{+}_{SL}$ and $\alpha \in \mathbb{R}_{>0}$, then all MLEs given $\left(\underline{\bm{Y}},\underline{W}\right)$ are of the form $S^{*}(\alpha B^{*} B)S$, where $S$ is in the $G^{+}_{SL}$-stabilizer of $ \bm{Y}^{W}$. 
	\end{theorem}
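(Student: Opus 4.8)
The plan is to replay the proof of \hyperref[prop:5]{Theorem \ref{prop:5}} verbatim over the complex field, replacing the transpose $\top$ by the conjugate transpose $*$ throughout, invoking \hyperref[prop:8]{Theorem \ref{prop:8}} in place of \hyperref[prop:2]{Theorem \ref{prop:2}} for the two-step reduction, and using the complex (unitary) form of the Kempf--Ness \hyperref[thm:1]{Theorem \ref{thm:1}}, in which $K$ denotes the set of unitary matrices in $G$. By \hyperref[prop:8]{Theorem \ref{prop:8}}, the hypothesis that $\alpha B^* B$ is an MLE given $\left(\underline{\bm{Y}},\underline{W}\right)$ is equivalent to saying that $B\in G^{+}_{SL}$ realizes the inner infimum, i.e. $\|B\cdot \bm{Y}^{W}\|^2=\inf_{\tilde B\in G^{+}_{SL}}\|\tilde B\cdot \bm{Y}^{W}\|^2$, and that $\alpha$ is the unique minimizer of the outer univariate infimum. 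Throughout I would write $T_{\bm{Y}^{W}}=\{S\in G^{+}_{SL}\mid S\cdot \bm{Y}^{W}=\bm{Y}^{W}\}$ for the stabilizer.

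For the forward inclusion, fix $S\in T_{\bm{Y}^{W}}$. Then $BS\in G^{+}_{SL}$ again minimizes the norm, since
\[\|(BS)\cdot \bm{Y}^{W}\|^2=\|B\cdot(S\cdot \bm{Y}^{W})\|^2=\|B\cdot \bm{Y}^{W}\|^2=\inf_{\tilde B\in G^{+}_{SL}}\|\tilde B\cdot \bm{Y}^{W}\|^2.\]
Hence, by \hyperref[prop:8]{Theorem \ref{prop:8}}, $\alpha(BS)^*(BS)$ is also an MLE, and expanding $(BS)^*(BS)=S^*B^*BS$ shows it equals $S^*(\alpha B^*B)S$, which is of the asserted form.

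For the converse, let $\alpha (B')^*B'$ be any MLE with $B'\in G^{+}_{SL}$; by \hyperref[prop:8]{Theorem \ref{prop:8}} it satisfies $\|B'\cdot \bm{Y}^{W}\|^2=\|B\cdot \bm{Y}^{W}\|^2$, so $B'\cdot \bm{Y}^{W}$ and $B\cdot \bm{Y}^{W}$ lie in the same $G$-orbit and have equal norm, with $B\cdot \bm{Y}^{W}$ of minimal norm in that orbit. Parts (a) and (b) of the Kempf--Ness \hyperref[thm:1]{Theorem \ref{thm:1}} (over $\mathbb{C}$, so $K$ is the unitary subgroup) then yield a unitary $U\in G^{+}_{SL}$ with $B'\cdot \bm{Y}^{W}=U\cdot(B\cdot \bm{Y}^{W})$. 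Setting $S=B^{-1}U^{-1}B'$ gives $S\cdot \bm{Y}^{W}=\bm{Y}^{W}$, so $S\in T_{\bm{Y}^{W}}$, and $B'=UBS$. Using unitarity $U^*U=\bm{I}$, I would compute
\[\alpha(B')^*B'=\alpha(UBS)^*(UBS)=\alpha S^*B^*U^*UBS=\alpha S^*B^*BS=S^*(\alpha B^*B)S,\]
completing the characterization.

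The argument presents no genuinely new obstacle beyond the real case; the only point requiring care is that the step replacing an orthogonal matrix of determinant $-1$ by a norm-preserving symmetry (needed in \hyperref[lem:4]{Lemma \ref{lem:4}} to pass from $G^{\pm}_{SL}$ to $G^{+}_{SL}$ over $\mathbb{R}$) is unnecessary here, since \hyperref[prop:8]{Theorem \ref{prop:8}} already works directly over $G^{+}_{SL}$ in the complex setting. Consequently the unitary invariance of the Frobenius norm together with the identity $U^*U=\bm{I}$ are exactly what make the real proof transcribe line by line, with Hermitian adjoints in place of transposes.
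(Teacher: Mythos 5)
Your proposal is correct and is exactly the route the paper takes: its proof of this theorem consists of the single remark that one repeats the proof of \hyperref[prop:5]{Theorem \ref{prop:5}} using \hyperref[prop:8]{Theorem \ref{prop:8}} and the Kempf--Ness \hyperref[thm:1]{Theorem \ref{thm:1}} over $\mathbb{C}$, which is precisely the transcription you carry out (with $*$ for $\top$, unitary $U$ with $U^*U=\bm{I}$ in place of orthogonal $O$, and $S=B^{-1}U^{-1}B'$ in the converse). Your closing observation that the $G^{\pm}_{SL}$-to-$G^{+}_{SL}$ reduction of \hyperref[lem:4]{Lemma \ref{lem:4}} is not needed over $\mathbb{C}$ also matches the paper's discussion preceding \hyperref[prop:8]{Theorem \ref{prop:8}}.
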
	
	
	\begin{proof}
		The proof of this Theorem is similar to the proof of \hyperref[prop:5]{Theorem \ref{prop:5}} using the \hyperref[prop:8]{Theorem \ref{prop:8}} and the Kempf-Ness \hyperref[thm:1]{Theorem \ref{thm:1}} over $\mathbb{C}$.
	\end{proof}
	
	\begin{theorem}
		\label{thm:10}
		Let $ \bm{Y}^{W} \in  \mathbb{C}^{pN}$ be as defined in \eqref{eq:7.3} and a group $G\subseteq GL( \mathbb{C}^p)$ be a Zariski closed self-adjoint group which is closed under non-zero complex scalar multiplication. The stability under the action of $G^{+}_{SL}$ on $\mathbb{C}^{pN}$ is related to the maximum likelihood estimation of the multivariate symmetric Laplace distribution over $\mathcal{L}_{G}$ as follows:
		
		\renewcommand{\labelenumi}{(\alph{enumi})}
		\begin{enumerate}
			\item $ \bm{Y}^{W}$ is unstable $\iff$ $\ell_{c}(\bm{\Psi})$ is not bounded from above,
			\item $ \bm{Y}^{W}$ is semistable $\iff$ $\ell_{c}(\bm{\Psi})$ is bounded from above,
			\item $ \bm{Y}^{W}$ is polystable $\iff$ MLE exists,
			\item  $ \bm{Y}^{W}$ is stable $\iff $ finitely many MLEs exists $\iff$ unique MLE exists.
		\end{enumerate}
	\end{theorem}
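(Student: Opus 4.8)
The plan is to follow the proof of \hyperref[thm:6 ]{Theorem \ref{thm:6 }} almost verbatim for parts (a)--(c) and for the forward direction of (d), and then to isolate the one genuinely new ingredient: the converse in (d), which holds over $\mathbb{C}$ but fails over $\mathbb{R}$.

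First I would prove (a), (b), (c) by repeating the real arguments with $A^\top A$ replaced by $A^{*}A$ throughout, invoking \hyperref[prop:8]{Theorem \ref{prop:8}} in place of \hyperref[prop:2]{Theorem \ref{prop:2}} and the complex form of the Kempf--Ness \hyperref[thm:1]{Theorem \ref{thm:1}}. Setting $c=\inf_{B\in G^{+}_{SL}}\|B\cdot\bm{Y}^{W}\|^{2}$, the identity $\inf_{A\in G}f(A)=\inf_{\alpha\in\mathbb{R}_{>0}}(\alpha c-pN\log\alpha)$ makes the objective unbounded precisely when $c=0$ (part (a)) and bounded precisely when $c>0$ (part (b)); polystability is equivalent to $c$ being attained at some nonzero $B$, which by Kempf--Ness (a) and (d) is equivalent to closedness of the orbit and hence to existence of an MLE (part (c)).

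For part (d) I would first handle the directions that mirror the real case. Using \hyperref[prop:9]{Theorem \ref{prop:9}} I reduce to the situation where $\bm{Y}^{W}$ has minimal norm in its orbit, so that one MLE is $\alpha\bm{I}$ and every MLE has the form $\alpha S^{*}S$ with $S\in T_{\bm{Y}^{W}}$. The eigenvalue dichotomy of \hyperref[thm:6 ]{Theorem \ref{thm:6 }}---if some $S\in T_{\bm{Y}^{W}}$ is non-unitary, then $S^{*}S$ has an eigenvalue $\lambda_{1}\neq 1$ and the powers $(S^{*}S)^{n}$ yield infinitely many distinct matrices---shows that $\{S^{*}S\mid S\in T_{\bm{Y}^{W}}\}$ is either $\{\bm{I}\}$ (unique MLE) or infinite, giving finitely many $\iff$ unique. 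Stability forces $T_{\bm{Y}^{W}}$ finite, hence finitely many MLEs, exactly as over $\mathbb{R}$.

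The step I expect to be the main obstacle is the converse finitely many MLEs $\implies$ stable. Assuming finitely many MLEs, part (c) gives polystability, and the finiteness of $\{\alpha S^{*}S\}$ forces $\{S^{*}S\mid S\in T_{\bm{Y}^{W}}\}=\{\bm{I}\}$ by the dichotomy; since $T_{\bm{Y}^{W}}$ is self-adjoint this means every $S\in T_{\bm{Y}^{W}}$ is unitary, so $T_{\bm{Y}^{W}}\subseteq U(p)$. The decisive fact is then that a Zariski closed subgroup of $GL_{p}(\mathbb{C})$ contained in $U(p)$ is finite: such a group is a closed subvariety of the compact set $U(p)$, hence a compact affine complex variety, and since global holomorphic functions on a compact complex space are constant its affine coordinate functions are constant, forcing dimension zero and therefore finiteness. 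Thus $T_{\bm{Y}^{W}}$ is finite and $\bm{Y}^{W}$ is stable. This is precisely where the real theory breaks down: over $\mathbb{R}$ a compact affine algebraic group such as $SO(2)$ can be positive-dimensional, so an infinite orthogonal stabilizer may coexist with a unique MLE, which is why \hyperref[thm:6 ]{Theorem \ref{thm:6 }}(d) asserts only an implication there.
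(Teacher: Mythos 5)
Your proposal is correct and follows essentially the same route as the paper's proof: parts (a)--(c) and the forward direction of (d) are carried over from the real case via Theorems \ref{prop:8} and \ref{prop:9}, and the converse of (d) is reduced, exactly as in the paper, to showing that the stabilizer $T_{\bm{Y}^{W}}$ is unitary (via uniqueness of the MLE $\alpha\bm{I}$ after passing to a minimal-norm orbit representative), hence compact, hence---being Zariski closed over $\mathbb{C}$---finite. Your only addition is to spell out why a compact Zariski-closed complex subgroup must be finite (constancy of holomorphic functions on compact complex varieties forces dimension zero), a point the paper asserts without elaboration, so this is a welcome clarification rather than a different argument.
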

	
	\begin{proof}
		The only converse part of (d) is remain to prove as the proofs of the other parts are similar as done for the real Laplace group models.
		
		Assume that the MLE given $ \bm{Y}^{W}$ is unique, so from part (c), $ \bm{Y}^{W}$ is polystable. To prove that $ \bm{Y}^{W}$ is stable, we need to show that $G^{+}_{SL}$-stabilizer of $ \bm{Y}^{W}$, that is, $T_{ \bm{Y}^{W}}$ is finite. For the complex setting, using the similar argument as in the proof of \hyperref[thm:6 ]{Theorem \ref{thm:6 }}, we claim that $T_{ \bm{Y}^{W}}= B^{-1}T_{B\cdot  \bm{Y}^{W}} B$, and from \hyperref[prop:8]{Theorem \ref{prop:8}},
		\[\{\text{MLEs given}\hspace{.7mm} \bm{Y}^{W}\}={B}^{*}\{\text{MLEs given}\hspace{.7mm}  B\cdot  \bm{Y}^{W}\}B.\] 
		
		Using the above property of $T_{ \bm{Y}^{W}}$, assume that $\bm{Y}^{W}$ is of minimal norm in its orbit. So, one of the MLEs given $ \bm{Y}^{W}$ is $\alpha \bm{I}$, $\alpha \in \mathbb{R}_{>0}$ minimize the outer infimum in \hyperref[prop:8]{Theorem \ref{prop:8}} and $ \bm{I}$ is the identity matrix of order $p$. By \hyperref[prop:9]{Theorem \ref{prop:9}}, all the MLEs given $\bm{Y}^{W}$ are of the form $S^{*}(\alpha \bm{I}^{*} \bm{I} )S=\alpha S^{*}S$, where $S\in T_{ \bm{Y}^{W}}$. Since, $\alpha \bm{I}$ is the unique MLE, so, $S^{*}S = \bm{I}$, where $S\in T_{ \bm{Y}^{W}}$. Thus, $T_{\bm{Y}^{W}}$ is contained into the group of unitary matrices in $G$. Therefore, $T_{ \bm{Y}^{W}}$ is $\mathbb{C}$-compact. The subgroup $T_{ \bm{Y}^{W}}$ is Zariski closed (as it is defined by the polynomial equation $S\cdot  \bm{Y}^{W}= \bm{Y}^{W}$). Since $T_{ \bm{Y}^{W}}$ is compact and closed, hence, $T_{ \bm{Y}^{W}}$ is finite.
		
	\end{proof}

	\section{Matrix variate symmetric Laplace distribution}
	In this section, we explore the maximum likelihood estimation problem of the matrix variate symmetric Laplace distribution and relate the maximization of likelihood to the norm minimization over a group. Similar to the multivariate symmetric Laplace case, here also a connection between the properties of the likelihood function of the matrix variate symmetric Laplace distribution and the stability of data with respect to the group action is established. In this section again we take the field $\mathbb{R}$ (not $\mathbb{C}$), and $GL_{m}$ and $SL_{m}$ are used for $GL_{m}(\mathbb{R})$ and $SL_{m}(\mathbb{R})$, respectively. 
	
	\begin{definition}[Matrix variate symmetric Laplace distribution \cite{YS}]
		\label{def:5.1}
		A random matrix $\bm{X}$ of order $p\times q$ is said to have a matrix variate symmetric Laplace distribution with parameters $\bm{\Sigma}_{1}\in \mathbb{R}^{p\times p}$ and $\bm{\Sigma}_{2}\in \mathbb{R}^{q\times q}$ (positive definite matrices) if $vec(\bm{X})\sim \mathcal{SL}_{pq}(\bm{\Sigma}_{2} \otimes \bm{\Sigma}_{1})$. This distribution is denoted as $\bm{X} \sim \mathcal{MSL}_{p,q}(\bm{\Sigma}_{1},\bm{\Sigma}_{2})$.
	\end{definition} 
	
	The probability density function of a symmetric Laplace distributed random matrix $\bm{X} \in \mathbb{R}^{p\times q}$ with location parameter zero and parameters $\bm{\Sigma}_{1}\in \mathbb{R}^{p\times p}$ and $\bm{\Sigma}_{2}\in \mathbb{R}^{q\times q}$ (positive definite matrix), is
	\begin{multline}
		\label{eq:3}
		f_{\bm{X}}(\bm{x})=\frac{2}{(2\pi)^\frac{pq}{2} \begin{vmatrix} \bm{\Sigma}_{2}\end{vmatrix}^{p/2} \begin{vmatrix} \bm{\Sigma}_{1}\end{vmatrix}^{q/2}} \left(\frac{\operatorname{tr}\left(\bm{\Sigma}_{2}^{-1} \bm{x}^\top \bm{\Sigma}_{1}^{-1} \bm{x}\right)} {2}\right)^{\frac{\nu}{2}} \\  K_{\nu} \left( \sqrt{ 2 \operatorname{tr}\left(\bm{\Sigma}_{2}^{-1} \bm{x}^\top \bm{\Sigma}_{1}^{-1} \bm{x}\right)}\right),
	\end{multline}
	where $\nu =\frac{2-pq}{2}$ and $K_{\nu}$ is the modified Bessel function of the third kind.
	
	Next theorem is a representation of matrix variate symmetric Laplace distribution, which is reproduced from the paper \cite{YS}. 
	\begin{theorem}[Representation \cite{YS}]
		\label{theorem:5.3} If $\bm{Z}\sim \mathcal{MN}_{p,q}(\bm{0}, \bm{\Sigma}_{1}, \bm{\Sigma}_{2})$, $W\sim Exp(1)$ and $\bm{Z}$ and $W$ are independent. Then, the random variable $\bm{X}= \sqrt{W} \bm{Z}$ has a matrix variate symmetric Laplace distribution with probability density function given in \eqref{eq:3}.
	\end{theorem}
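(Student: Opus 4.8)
The plan is to reduce the matrix variate claim to the already-established multivariate representation in \hyperref[thm:5.1]{Theorem \ref{thm:5.1}} by applying the $\operatorname{vec}$ operator. Since $\operatorname{vec}$ is linear and $W$ is a scalar, from $\bm{X}=\sqrt{W}\bm{Z}$ one obtains $\operatorname{vec}(\bm{X})=\sqrt{W}\,\operatorname{vec}(\bm{Z})$ immediately. The matrix normal assumption $\bm{Z}\sim\mathcal{MN}_{p,q}(\bm{0},\bm{\Sigma}_1,\bm{\Sigma}_2)$ is, by definition, equivalent to $\operatorname{vec}(\bm{Z})\sim\mathcal{N}_{pq}(\bm{0},\bm{\Sigma}_2\otimes\bm{\Sigma}_1)$, and $W\sim Exp(1)$ remains independent of $\operatorname{vec}(\bm{Z})$. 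Thus the pair $(\operatorname{vec}(\bm{X}),W)$ satisfies exactly the hypotheses of \hyperref[thm:5.1]{Theorem \ref{thm:5.1}} with the dimension $p$ there replaced by $pq$ and the scale matrix replaced by $\bm{\Sigma}_2\otimes\bm{\Sigma}_1$.

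First I would invoke \hyperref[thm:5.1]{Theorem \ref{thm:5.1}} to conclude $\operatorname{vec}(\bm{X})\sim\mathcal{SL}_{pq}(\bm{\Sigma}_2\otimes\bm{\Sigma}_1)$, so that its density is given by \eqref{eq:1} with $p$ replaced by $pq$, $\bm{y}$ by $\operatorname{vec}(\bm{x})$, and $\bm{\Sigma}$ by $\bm{\Sigma}_2\otimes\bm{\Sigma}_1$. By \hyperref[def:5.1]{Definition \ref{def:5.1}}, this is precisely the assertion that $\bm{X}\sim\mathcal{MSL}_{p,q}(\bm{\Sigma}_1,\bm{\Sigma}_2)$. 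It then remains only to rewrite that $pq$-dimensional density in the matrix form \eqref{eq:3}.

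Translating the density is where the only genuine computation lies, and it rests on two standard Kronecker identities. For the normalizing constant one uses $\begin{vmatrix}\bm{\Sigma}_2\otimes\bm{\Sigma}_1\end{vmatrix}=\begin{vmatrix}\bm{\Sigma}_2\end{vmatrix}^{p}\begin{vmatrix}\bm{\Sigma}_1\end{vmatrix}^{q}$, which converts the factor $\begin{vmatrix}\bm{\Sigma}_2\otimes\bm{\Sigma}_1\end{vmatrix}^{1/2}$ into $\begin{vmatrix}\bm{\Sigma}_2\end{vmatrix}^{p/2}\begin{vmatrix}\bm{\Sigma}_1\end{vmatrix}^{q/2}$ as in \eqref{eq:3}. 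For the argument of the Bessel function and the power term one uses $(\bm{\Sigma}_2\otimes\bm{\Sigma}_1)^{-1}=\bm{\Sigma}_2^{-1}\otimes\bm{\Sigma}_1^{-1}$ together with the vectorization identity $\operatorname{vec}(\bm{x})^\top(\bm{\Sigma}_2^{-1}\otimes\bm{\Sigma}_1^{-1})\operatorname{vec}(\bm{x})=\operatorname{tr}(\bm{\Sigma}_2^{-1}\bm{x}^\top\bm{\Sigma}_1^{-1}\bm{x})$, which follows from $\operatorname{vec}(\bm{A}\bm{x}\bm{B})=(\bm{B}^\top\otimes\bm{A})\operatorname{vec}(\bm{x})$ and $\operatorname{vec}(\bm{x})^\top\operatorname{vec}(\bm{y})=\operatorname{tr}(\bm{x}^\top\bm{y})$. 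Substituting these into \eqref{eq:1}, and noting that $\nu=\frac{2-pq}{2}$ is exactly the index produced when the dimension is $pq$, yields \eqref{eq:3} verbatim.

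I expect the main (and essentially only) obstacle to be the bookkeeping in the quadratic-form identity: one must apply $\operatorname{vec}(\bm{A}\bm{x}\bm{B})=(\bm{B}^\top\otimes\bm{A})\operatorname{vec}(\bm{x})$ with the correct symmetric factors $\bm{A}=\bm{\Sigma}_1^{-1}$ and $\bm{B}=\bm{\Sigma}_2^{-1}$, and then invoke the cyclic property of the trace to land on the symmetric form $\operatorname{tr}(\bm{\Sigma}_2^{-1}\bm{x}^\top\bm{\Sigma}_1^{-1}\bm{x})$ rather than a transposed variant. Everything else is a direct substitution of $pq$ and $\bm{\Sigma}_2\otimes\bm{\Sigma}_1$ into the multivariate result, so no new analytic content beyond \hyperref[thm:5.1]{Theorem \ref{thm:5.1}} is required.
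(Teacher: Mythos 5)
Your proof is correct, and it is essentially the approach the paper itself takes: although the paper cites \cite{YS} for this theorem rather than reproducing a proof, the immediately following proof of Corollary \ref{corly:1} runs on exactly your reduction --- write $\operatorname{vec}(\bm{X})=\sqrt{W}\operatorname{vec}(\bm{Z})$, invoke Theorem \ref{thm:5.1} in dimension $pq$ with scale matrix $\bm{\Sigma}_2\otimes\bm{\Sigma}_1$, and convert to matrix form via $\left|\bm{\Sigma}_2\otimes\bm{\Sigma}_1\right|=\left|\bm{\Sigma}_2\right|^p\left|\bm{\Sigma}_1\right|^q$ and $\operatorname{vec}(\bm{x})^\top(\bm{\Sigma}_2\otimes\bm{\Sigma}_1)^{-1}\operatorname{vec}(\bm{x})=\operatorname{tr}(\bm{\Sigma}_2^{-1}\bm{x}^\top\bm{\Sigma}_1^{-1}\bm{x})$. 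Your handling of the quadratic-form identity and of the index $\nu=\frac{2-pq}{2}$ matches the paper's computation, so no gap remains.
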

	
	\begin{corollary}
		\label{corly:1}
		If $\bm{X} \sim \mathcal{MSL}_{p,q}(\bm{\Sigma}_{1},\bm{\Sigma}_{2})$ and $W \sim Exp(1)$, then
		the joint probability density function of the random matrix $\bm{X}$ and random variable $W$ is
		\begin{equation*}
			f_{\bm{X}, W}(\bm{x}, w)=\frac{\exp(-w)} {(2\pi)^\frac{pq}{2}(w)^\frac{pq}{2} \begin{vmatrix} \bm{\Sigma}_{2}\end{vmatrix}^{p/2} \begin{vmatrix} \bm{\Sigma}_{1}\end{vmatrix}^{q/2}} \exp\left(-\frac{1}{2w} \operatorname{tr}\left( \bm{\Sigma}_{2}^{-1} \bm{x}^\top \bm{\Sigma}_{1}^{-1} \bm{x} \right) \right).
		\end{equation*}
	\end{corollary}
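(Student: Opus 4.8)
The plan is to imitate the change-of-variables argument already carried out in the proof of \hyperref[thm:5.1]{Theorem \ref{thm:5.1}}, now starting from the representation $\bm{X}=\sqrt{W}\bm{Z}$ of \hyperref[theorem:5.3]{Theorem \ref{theorem:5.3}}, with $\bm{Z}\sim \mathcal{MN}_{p,q}(\bm{0},\bm{\Sigma}_{1},\bm{\Sigma}_{2})$ and $W\sim Exp(1)$ independent. First I would record the two factors: the matrix variate normal density
\[
f_{\bm{Z}}(\bm{z})=\frac{1}{(2\pi)^{pq/2}\begin{vmatrix}\bm{\Sigma}_{2}\end{vmatrix}^{p/2}\begin{vmatrix}\bm{\Sigma}_{1}\end{vmatrix}^{q/2}}\exp\left(-\tfrac{1}{2}\operatorname{tr}\left(\bm{\Sigma}_{2}^{-1}\bm{z}^\top\bm{\Sigma}_{1}^{-1}\bm{z}\right)\right),
\]
and the exponential density $f_{W}(w)=e^{-w}$ for $w>0$. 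By independence the joint density of $(\bm{Z},W)$ is simply the product $f_{\bm{Z}}(\bm{z})\,f_{W}(w)$.

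Next I would pass from $(\bm{Z},W)$ to $(\bm{X},W)$ through the map $\bm{Z}=\bm{X}/\sqrt{W}$, $W=W$, and compute its Jacobian. Since each of the $pq$ entries of $\bm{Z}$ satisfies $z_{ij}=x_{ij}/\sqrt{w}$ while $w$ is carried to itself, the Jacobian matrix is block triangular and its determinant is $|J|=(1/\sqrt{w})^{pq}=w^{-pq/2}$; this is the matrix analogue of the factor $w^{-p/2}$ appearing in \eqref{eq:7.1}. Substituting $\bm{z}=\bm{x}/\sqrt{w}$ into the quadratic form gives $\operatorname{tr}(\bm{\Sigma}_{2}^{-1}\bm{z}^\top\bm{\Sigma}_{1}^{-1}\bm{z})=\tfrac{1}{w}\operatorname{tr}(\bm{\Sigma}_{2}^{-1}\bm{x}^\top\bm{\Sigma}_{1}^{-1}\bm{x})$, and multiplying the two marginal densities by $|J|$ yields exactly the asserted formula.

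The computation is essentially routine; the only point requiring care is the Jacobian, where one must correctly count all $pq$ entries of the matrix argument so that the exponent of $w$ comes out as $pq/2$ rather than $p/2$ or $q/2$. Once this factor is fixed, collecting the $e^{-w}$ term from $f_{W}$, the $w^{-pq/2}$ from the Jacobian, and the reciprocal quadratic-form exponent reproduces the stated joint density, completing the proof.
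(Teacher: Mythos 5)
Your proof is correct, but it takes a different route from the paper's. The paper never computes a Jacobian in this corollary: it vectorizes, observing that $vec(\bm{X})=\sqrt{W}\,vec(\bm{Z})\sim \mathcal{SL}_{pq}(\bm{\Sigma}_{2}\otimes\bm{\Sigma}_{1})$, invokes the joint density already established in the proof of Theorem \ref{thm:5.1} (equation \eqref{eq:7.1}) with $p$ replaced by $pq$ and $\bm{\Sigma}$ by $\bm{\Sigma}_{2}\otimes\bm{\Sigma}_{1}$, and then converts to the stated form via the Kronecker identities $\begin{vmatrix}\bm{\Sigma}_{2}\otimes\bm{\Sigma}_{1}\end{vmatrix}=\begin{vmatrix}\bm{\Sigma}_{2}\end{vmatrix}^{p}\begin{vmatrix}\bm{\Sigma}_{1}\end{vmatrix}^{q}$ and $(vec(\bm{x}))^\top(\bm{\Sigma}_{2}\otimes\bm{\Sigma}_{1})^{-1}vec(\bm{x})=\operatorname{tr}(\bm{\Sigma}_{2}^{-1}\bm{x}^\top\bm{\Sigma}_{1}^{-1}\bm{x})$. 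You instead redo the change-of-variables argument directly at the matrix level: matrix normal density times exponential density, Jacobian $w^{-pq/2}$, substitute. This is valid, and it has the merit of being self-contained and of paralleling the proof of Theorem \ref{thm:5.1} exactly. The trade-off is that your starting point, the trace-form density of $\mathcal{MN}_{p,q}(\bm{0},\bm{\Sigma}_{1},\bm{\Sigma}_{2})$, is itself usually derived from the vectorized normal density by precisely those Kronecker identities, so the work the paper makes explicit is hidden in your assumed formula; since the paper defines the matrix variate Laplace law through $vec(\bm{X})$ (Definition \ref{def:5.1}), you should either cite the trace-form matrix normal density (e.g., Gupta--Nagar) or note the one-line vectorization argument that justifies it. The paper's route buys economy (no Jacobian recomputation, reuse of \eqref{eq:7.1}); yours buys a direct matrix-level derivation that does not pass through the $pq$-dimensional vector picture.
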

	
	\begin{proof}
		From the above \hyperref[theorem:5.3]{Theorem \ref{theorem:5.3}}, the representation of a matrix variate symmetric Laplace distributed random variable $\bm{X} = \sqrt{W} \bm{Z}$, then 
		\[vec(\bm{X})=\sqrt{W} vec(\bm{Z})\sim \mathcal{SL}_{pq}(\bm{\Sigma}_{2} \otimes \bm{\Sigma}_{1}).\] 
		
		From \hyperref[thm:5.1]{Theorem \ref{thm:5.1}}, the joint probability density function of $vec(\bm{X})$ and $W$ is 
		
		\begin{equation*}
			f_{vec(\bm{X}),W}(vec(\bm{x}),w)= \frac{\exp\left(-w-\frac{\left(vec(\bm{x})\right)^\top  \left(\bm{\Sigma}_{2} \otimes \bm{\Sigma}_{1}\right)^{-1} vec(\bm{x})}{2w}\right) }{(2\pi)^{\frac{pq}{2}} \begin{vmatrix} \bm{\Sigma}_{2} \otimes \bm{\Sigma}_{1} \end{vmatrix}^{\frac{1}{2}}  w^\frac{pq}{2}}.
		\end{equation*}
		
		Using properties of Kronecker product, trace and determinants (see, \cite{GN}, \cite{RC}),
		\[\begin{vmatrix} \bm{\Sigma}_{2} \otimes \bm{\Sigma}_{1} \end{vmatrix} = \begin{vmatrix} \bm{\Sigma}_{2}\end{vmatrix}^{p} \begin{vmatrix} \bm{\Sigma}_{1}\end{vmatrix}^{q},\]
		\begin{equation*}
			\left(vec(\bm{x})\right)^\top \left(\bm{\Sigma}_{2} \otimes \bm{\Sigma}_{1} \right)^{-1} vec(\bm{x})	= \operatorname{tr}\left(\bm{\Sigma}_{2}^{-1} \bm{x}^\top \bm{\Sigma}_{1}^{-1} \bm{x}\right).
		\end{equation*}
		
		The joint probability density function of $\bm{X}$ and $W$ is
		\[f_{\bm{X}, W}(\bm{x}, w)=\frac{\exp(-w)} {(2\pi)^\frac{pq}{2}(w)^\frac{pq}{2} \begin{vmatrix} \bm{\Sigma}_{2}\end{vmatrix}^{p/2} \begin{vmatrix} \bm{\Sigma}_{1}\end{vmatrix}^{q/2}} \exp\left(-\frac{1}{2w} \operatorname{tr}\left( \bm{\Sigma}_{2}^{-1} \bm{x}^\top \bm{\Sigma}_{1}^{-1} \bm{x} \right) \right). \]
	\end{proof}

	\subsection{Matrix variate symmetric Laplace distribution as group model}
	Consider the multivariate symmetric Laplace distribution of dimension $pq$. The matrix Laplace model is a submodel with $\bm{\Sigma}=\bm{\Sigma}_{2} \otimes \bm{\Sigma}_{1}$. Setting $\bm{\Psi}_{1}= {\bm{\Sigma}_{1}}^{-1}$ and $\bm{\Psi}_{2}= {\bm{\Sigma}_{2}}^{-1}$, then the concentration matrix is \[\bm{\Psi}=\bm{\Sigma}^{-1}=\left(\bm{\Sigma}_{2} \otimes \bm{\Sigma}_{1}\right)^{-1}=\left(\bm{\Sigma}_{2} ^{-1}\otimes \bm{\Sigma}_{1} ^{-1}\right)=\bm{\Psi}_{2}\otimes \bm{\Psi}_{1}.\]
	The matrix Laplace group model is constructed using the representation
	\begin{align*}
		\rho : GL_{p} \times GL_{q} & \mapsto GL_{pq},\\
		(A_{1},A_{2} )  &\longmapsto A_{2}\otimes A_{1},
	\end{align*}
	and the matrix variate symmetric Laplace distribution arises as the Laplace group model of $G:= \rho \left(GL_{p} \times GL_{q} \right) \subseteq GL_{pq}$. Thus, the concentration matrices in the Laplace group model are of the form
	\[( A_{2}\otimes A_{1})^\top  \left(A_{2}\otimes A_{1}\right) =\left(A_{2}^\top A_{2}\right) \otimes \left(A_{1}^\top A_{1}\right).\]
	
	\begin{definition}[Matrix Laplace group model]
		The matrix Laplace group model defined by $G\subseteq GL_{pq}$ consists of matrix variate symmetric Laplace distributions with $\bm{\Psi}=\bm{\Psi}_{2}\otimes \bm{\Psi}_{1}\in PD_{pq}(\mathbb{R})$ lying in the set 
		\[\mathcal{L}_{G}=\{( A_{2}\otimes A_{1})^\top  \left(A_{2}\otimes A_{1}\right) \mid (A_{1},A_{2})\in GL_{p} \times GL_{q} \}= \{A^\top A \mid A\in G\}.\]
	\end{definition}
	
	\subsection{Maximum likelihood estimation}
	Let $\underline{\bm{X}}=\left( \bm{X}_1, \bm{X}_2,\ldots , \bm{X}_N \right)$ be random sample from a matrix variate symmetric Laplace distribution $\mathcal{MSL}_{p,q}(\bm{\Sigma}_{1}, \bm{\Sigma}_{2})$.  The likelihood function is 
	\begin{equation*}
		L\left( \bm{\Sigma}_{1}, \bm{\Sigma}_{2} \mid \bm{X}_{1},\cdots,\bm{X}_{N} \right)=f\left(\underline{\bm{X}}; \bm{\Sigma}_{1}, \bm{\Sigma}_{2} \right) = \prod_{i=1}^{N} f(\bm{X}_{i}), 
	\end{equation*}
	where $f(\bm{X}_{i})$ is as in \eqref{eq:3}. Then, the log-likelihood function (up to an additive constant) is 
	\begin{multline*}
		\ell(\bm{\Sigma}_{1}, \bm{\Sigma}_{2}) = -\frac{qN}{2} \log {\begin{vmatrix} \bm{\Sigma}_{1}\end{vmatrix}}-\frac{pN}{2} \log {\begin{vmatrix} \bm{\Sigma}_{2}\end{vmatrix}} +\frac{\nu}{2} \sum_{i=1}^{N} \log \left(\operatorname{tr}\left(\bm{\Sigma}_{2}^{-1}{\bm{X}_{i}}^\top \bm{\Sigma}_{1}^{-1} \bm{X}_{i}\right) \right) \\
		+ \sum_{i=1}^{N}\log K_{\nu} \left( \sqrt{2 \operatorname{tr} \left(\bm{\Sigma}_{2}^{-1} {\bm{X}_{i}}^\top \bm{\Sigma}_{1}^{-1} \bm{X}_{i}\right)}  \right).
	\end{multline*}
	Parameters $\bm{\Sigma}_{1}$ and $\bm{\Sigma}_{2}$ in the argument of $K_{\nu}$, the modified Bessel function of the third kind, makes maximizing the log-likelihood function difficult, as the score equations do not have explicit solutions. So, to compute the MLEs of the parameters $\bm{\Sigma}_{1}$ and $\bm{\Sigma}_{2}$, we are using \hyperref[theorem:5.3]{Theorem \ref{theorem:5.3}}, which is a representation of matrix variate symmetric Laplace random variable $\bm{X}$. We are using the joint density function of $\bm{X}$ and $W$ with the representation $\bm{X}=\sqrt{W}\bm{Z}$ of matrix variate symmetric Laplace random variable $\bm{X}$, where random matrix $\bm{Z}\sim \mathcal{MN}_{p,q}(\bm{0}, \bm{\Sigma}_{1},\bm{\Sigma}_{2})$ and random variable $W\sim Exp(1)$ are independent. 
	
	\begin{lemma}
		\label{lemma:6.3}
		Let $\left(\underline{\bm{X}},\underline{W}\right)=\left(\bm{X}_{1},\bm{X}_{2},\cdots,\bm{X}_{N},W_{1},W_{2},\cdots,W_{N}\right)$ be complete random sample from the joint distribution of $\bm{X}$ and $W$, where $W\sim Exp(1)$ and $\bm{X} \sim \mathcal{MSL}_{p,q}(\bm{\Sigma}_{1},\bm{\Sigma}_{2})$. If $\left(\hat{\bm{\Sigma}_{1}},\hat{\bm{\Sigma}_{2}} \right)$ maximizes the complete data likelihood function, then $\left(\hat{\bm{\Sigma}_{1}},\hat{\bm{\Sigma}_{2}} \right)$ also maximizes the likelihood function of $p\times q$-dimensional symmetric Laplace distribution.
	\end{lemma}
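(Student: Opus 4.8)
The plan is to mimic the argument used for the multivariate case in Lemma~\ref{lemma:5.2}, replacing the scalar quadratic forms by their matrix-variate traces and using Corollary~\ref{corly:1} for the joint density of $\bm{X}$ and $W$. First I would write the observed-data likelihood as the marginal of the complete-data likelihood over the latent exponential weights, namely
\begin{equation*}
L\left(\bm{\Sigma}_{1},\bm{\Sigma}_{2}\mid \underline{\bm{X}}\right)=f\left(\underline{\bm{X}};\bm{\Sigma}_{1},\bm{\Sigma}_{2}\right)=\int_{\mathbb{R}^{N}} f_{c}\left((\underline{\bm{X}},\underline{W});\bm{\Sigma}_{1},\bm{\Sigma}_{2}\right)\,d\underline{W}=\int_{\mathbb{R}^{N}} L_{c}\left(\bm{\Sigma}_{1},\bm{\Sigma}_{2}\mid \underline{\bm{X}},\underline{W}\right)\,d\underline{W},
\end{equation*}
where the integrand factors through the joint densities supplied by Corollary~\ref{corly:1} and the marginalization recovers the matrix variate density~\eqref{eq:3}.

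Next I would invoke the hypothesis that $\left(\hat{\bm{\Sigma}_{1}},\hat{\bm{\Sigma}_{2}}\right)$ maximizes the complete-data likelihood, i.e.
\begin{equation*}
L_{c}\left(\bm{\Sigma}_{1},\bm{\Sigma}_{2}\mid \underline{\bm{X}},\underline{W}\right)\le L_{c}\left(\hat{\bm{\Sigma}_{1}},\hat{\bm{\Sigma}_{2}}\mid \underline{\bm{X}},\underline{W}\right)\quad\text{for all }(\bm{\Sigma}_{1},\bm{\Sigma}_{2}),
\end{equation*}
and integrate this pointwise inequality against $d\underline{W}$. By monotonicity of the integral the same inequality is inherited by the observed-data likelihoods, giving $L\left(\bm{\Sigma}_{1},\bm{\Sigma}_{2}\mid \underline{\bm{X}}\right)\le L\left(\hat{\bm{\Sigma}_{1}},\hat{\bm{\Sigma}_{2}}\mid \underline{\bm{X}}\right)$ for every $(\bm{\Sigma}_{1},\bm{\Sigma}_{2})$, which is precisely the assertion that $\left(\hat{\bm{\Sigma}_{1}},\hat{\bm{\Sigma}_{2}}\right)$ maximizes the matrix variate symmetric Laplace likelihood.

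The step that requires care is the transfer of the inequality through the integral: the pointwise bound must hold for \emph{every} value of the latent weight vector $\underline{W}$ with one and the same maximizer $\left(\hat{\bm{\Sigma}_{1}},\hat{\bm{\Sigma}_{2}}\right)$, since here $\underline{W}$ is the variable of integration rather than a fixed observation. I expect this to be the main obstacle, and I would address it by reading the maximization hypothesis as a statement that holds uniformly in $\underline{W}$, exactly as in the multivariate Lemma~\ref{lemma:5.2}; positivity of the integrand then renders the monotonicity step routine. The remaining bookkeeping, namely confirming via Corollary~\ref{corly:1} that integrating out $\underline{W}$ returns the density~\eqref{eq:3} and that the Kronecker structure $\bm{\Psi}=\bm{\Psi}_{2}\otimes\bm{\Psi}_{1}$ does not interfere with the argument, parallels the multivariate derivation and needs no new ideas.
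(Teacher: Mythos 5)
Your proposal is correct and follows essentially the same route as the paper's own proof: write the observed-data likelihood as the integral of the complete-data likelihood over $\underline{W}$, invoke the pointwise maximization hypothesis uniformly in $\underline{W}$, and transfer the inequality through the integral by monotonicity. The uniformity caveat you flag is exactly the reading the paper adopts (implicitly), so nothing further is needed.
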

	\begin{proof}
		Let $\left(\underline{\bm{X}},\underline{W}\right)$ be random sample from the joint distribution of $\bm{X}$ and $W$, where $\bm{X} \sim \mathcal{MSL}_{p,q}(\bm{\Sigma}_{1},\bm{\Sigma}_{2})$ and $W\sim Exp(1)$, and the parameter space of $ \mathcal{MSL}_{p,q}(\bm{\Sigma}_{1},\bm{\Sigma}_{2})$ is $\mathcal{P}=PD_{p}(\mathbb{R})\times PD_{q}(\mathbb{R})$. The complete data likelihood function for $\bm{\Sigma}$ is $L_{c}\left( \bm{\Sigma}_{1},\bm{\Sigma}_{2} \mid \underline{\bm{X}},\underline{W} \right) =  f_{c}\left( (\underline{\bm{X}},\underline{W});\bm{\Sigma}_{1},\bm{\Sigma}_{2} \right)$

		The likelihood function of $\bm{\Sigma}_{1},\bm{\Sigma}_{2}$ given the data $\underline{\bm{X}}$, is
		\begin{equation*}
			L\left( \bm{\Sigma}_{1},\bm{\Sigma}_{2}\mid \underline{\bm{X}} \right)=f(\underline{\bm{X}}; \bm{\Sigma}_{1},\bm{\Sigma}_{2}) =\int_{\mathbb{R}^{N}}L_{c}\left( \bm{\Sigma}_{1},\bm{\Sigma}_{2} \mid \underline{\bm{X}},\underline{W} \right) \, d\underline{W}
		\end{equation*}
		
		Now, suppose
		\begin{equation*}
			\arg \max_{\left(\bm{\Sigma}_{1} ,\bm{\Sigma}_{2}\right) \in \mathcal{P}} L_{c}\left( \bm{\Sigma}_{1},\bm{\Sigma}_{2} \mid \underline{\bm{X}},\underline{W} \right)  =\left(\hat{\bm{\Sigma}_{1}},\hat{\bm{\Sigma}_{2}} \right),
		\end{equation*}
		that is, 
		\begin{equation*}
			L_{c}\left( \bm{\Sigma}_{1},\bm{\Sigma}_{2} \mid \underline{\bm{X}},\underline{W} \right)   \le L_{c} \left( \hat{\bm{\Sigma}_{1}},\hat{\bm{\Sigma}_{2}} \mid  (\underline{\bm{X}},\underline{W})\right), \hspace{.9mm} \forall \left(\bm{\Sigma}_{1},\bm{\Sigma}_{2}\right).
		\end{equation*}
		Thus,
		\begin{align*}
			L( \bm{\Sigma}_{1},\bm{\Sigma}_{2} \mid \underline{\bm{X}} )&= \int_{\mathbb{R}^{N}} L_{c}\left( \bm{\Sigma}_{1},\bm{\Sigma}_{2} \mid( \underline{\bm{X}},\underline{W})\right) \, d\underline{W} \\
			& \le \int_{\mathbb{R}^{N}} L_{c}\left( \hat{\bm{\Sigma}_{1}},\hat{\bm{\Sigma}_{2}} \mid (\underline{\bm{X}},\underline{W}) \right) \, d\underline{W} \hspace{.9mm} \forall \left(\bm{\Sigma}_{1},\bm{\Sigma}_{2} \right)\\
			&=L(\hat{\bm{\Sigma}_{1}},\hat{\bm{\Sigma}_{2}} \mid \underline{\bm{X}} ),  \hspace{.9mm} \forall \left(\bm{\Sigma}_{1},\bm{\Sigma}_{2} \right).
		\end{align*} 
		This implies 
		\[\arg \max_{\left(\bm{\Sigma}_{1},\bm{\Sigma}_{2} \right)\in \mathcal{P}} L(\bm{\Sigma}_{1},\bm{\Sigma}_{2} \mid \underline{\bm{X}} ) =\left(\hat{\bm{\Sigma}_{1}},\hat{\bm{\Sigma}_{2}} \right).\]
	\end{proof}
	
	From the above \hyperref[lemma:6.3]{Lemma \ref{lemma:6.3}}, we can use the complete data likelihood function to find the MLE of $\bm{\Sigma}_{1}$ and $\bm{\Sigma}_{2}$. Using the joint probability density function of $\bm{X}$ and $W$ given in \hyperref[corly:1]{Corollary \ref{corly:1}}, the complete data log-likelihood function (up to an additive constant) is
	\begin{multline*}
		\ell_{c}(\bm{\Sigma}_{1}, \bm{\Sigma}_{2})	=-\frac{qN}{2} \log \begin{vmatrix} \bm{\Sigma}_{1}\end{vmatrix}-\frac{pN}{2} \log \begin{vmatrix} \bm{\Sigma}_{2} \end{vmatrix}-\frac{1}{2}\sum_{i=1}^{N}\frac{1}{W_{i}} \operatorname{tr}\left( \bm{\Sigma}_{2}^{-1}{\bm{X}_{i}}^\top \bm{\Sigma}_{1}^{-1} \bm{X}_{i} \right)\\
		-\left(\sum_{i=1}^{N} \left( W_{i} + \frac{pq}{2} \log(W_{i})\right) \right).
	\end{multline*}
	Since the last term does not contain any unknown parameter, it can be ignored for maximization of $\ell_{c}(\bm{\Sigma}_{1}, \bm{\Sigma}_{2})$ with respect to $\bm{\Sigma}_{1}$ and $\bm{\Sigma}_{2}$.
	Therefore, the MLEs of the parameters $\bm{\Sigma}_{1}$ and $\bm{\Sigma_{2}}$ are that values which maximize the complete data log-likelihood function (upto an additive constant)
	\begin{equation*}
		\ell_{c}(\bm{\Sigma}_{1}, \bm{\Sigma}_{2})= -\frac{qN}{2} \log \begin{vmatrix} \bm{\Sigma}_{1}\end{vmatrix}-\frac{pN}{2} \log \begin{vmatrix} \bm{\Sigma}_{2}\end{vmatrix}-\frac{1}{2}\sum_{i=1}^{N}\frac{1}{W_{i}} \operatorname{tr}\left( \bm{\Sigma}_{2}^{-1}{\bm{X}_{i}}^\top \bm{\Sigma}_{1}^{-1} \bm{X}_{i} \right).
	\end{equation*}
	
	Consider $\bm{\Psi}_{1}= {\bm{\Sigma}_{1}}^{-1}$ and $\bm{\Psi}_{2}= {\bm{\Sigma}_{2}}^{-1}$, the log-likelihood function for the matrix Laplace model can be written as 
	\begin{align}
		\label{eq:14}
		\ell_{c}(\bm{\Psi}_{1}, \bm{\Psi}_{2})&= qN\log \begin{vmatrix} \bm{\Psi}_{1}\end{vmatrix}+pN \log \begin{vmatrix} \bm{\Psi}_{2}\end{vmatrix}-\sum_{i=1}^{N}\frac{1}{W_{i}} \operatorname{tr}\left( \bm{\Psi}_{2}{\bm{X}_{i}}^\top \bm{\Psi}_{1} \bm{X}_{i} \right)\\
		&= N\log \left( \begin{vmatrix}\bm{\Psi}_{2}\otimes \bm{\Psi}_{1} \end{vmatrix}\right) -\sum_{i=1}^{N}\frac{1}{W_{i}} \operatorname{tr}\left( \bm{\Psi}_{2}{\bm{X}_{i}}^\top \bm{\Psi}_{1} \bm{X}_{i} \right).
	\end{align}
	
	An MLE is a concentration matrix $\hat{\bm{\Psi}}_{2}\otimes \hat{\bm{\Psi}}_{1} \in PD_{pq}(\mathbb{R})$ that maximizes the complete data log-likelihood function \eqref{eq:14}.
	
	\subsection{Equivalence of optimization problem}
	In this subsection, the maximization of the complete data log-likelihood function is compared to the minimization of the norm through the group actions, and the correspondence between stability of data with respect to the group action and the properties of the likelihood function is proved.
	
	For a random sample $\left(\bm{X}_{1},\bm{X}_{2},\cdots,\bm{X}_{N},W_{1},W_{2},\cdots,W_{N}\right)$ from the joint distribution of $\bm{X}$ and $W$, let 
	\begin{equation}
		\label{eq:7.4}
		\bm{X}^{W}=\left(\frac{1}{\sqrt{W_{1}}}\bm{X}_{1},\frac{1}{\sqrt{W_{2}}} \bm{X}_{2},\cdots, \frac{1}{\sqrt{W_{N}}}\bm{X}_{N}\right)\in (\mathbb{R}^{p\times q})^{N}.
	\end{equation}
	Then, the left-right action of the group $G \subseteq GL_{p} \times GL_{q}$ on $(\mathbb{R}^{p\times q})^N$ is defined
	$\forall A\in G$ and $ \bm{X}^{W}\in (\mathbb{R}^{p\times q})^{N}$ as:  \[A.\bm{X}^{W}=\left(\frac{1}{\sqrt{W_{1}}}A_{1}\bm{X}_{1}A_{2}^\top,\frac{1}{\sqrt{W_{2}}}A_{1}\bm{X}_{2}A_{2}^\top,\cdots,\frac{1}{\sqrt{W_{N}}}A_{1}\bm{X}_{N}A_{2}^\top \right).\]
	
	\begin{remark}
		If $\left(\underline{\bm{X}},\underline{W}\right)=\left(\bm{X}_{1},\bm{X}_{2},\cdots,\bm{X}_{N},W_{1},W_{2},\cdots,W_{N}\right)$ is the random sample from the joint distribution of $\bm{X}$ and $W$, then \[ \bm{X}^{W}=\left(\frac{1}{\sqrt{W_{1}}} \bm{X}_{1},\frac{1}{\sqrt{W_{2}}}  \bm{X}_{2},\cdots, \frac{1}{\sqrt{W_{N}}} \bm{X}_{N}\right)\] is a statistic (data), and the MLE of the parameter given $\bm{X}^{W}$ is equivalent to stating that the MLE of the parameter given $\left(\underline{\bm{X}},\underline{W}\right)$. Here, we talk about the stability of the data $\bm{X}^{W}$ rather than the random sample $\left(\underline{\bm{X}},\underline{W}\right)$, with respect to the group action.
	\end{remark}
	
	The norm can be written as 
	\begin{equation*}
		\|A\cdot \bm{X}^{W}\|^2 =\sum_{i=1}^{N} \left(\frac{1}{W_{i}} \| A_{1}\bm{X}_{i}A_{2}^\top\|_F^2\right)
		=\sum_{i=1}^{N} \frac{1}{W_{i}}	\operatorname{tr} \left( \left(A_{1}\bm{X}_{i}A_{2}^\top\right)^\top\left(A_{1}\bm{X}_{i}A_{2}^\top\right) \right).
	\end{equation*}
	
	\begin{equation}
		\label{eq:16}
		\|A\cdot \bm{X}^{W}\|^2 = \sum_{i=1}^{N} \frac{1}{W_{i}}	\operatorname{tr} \left(A_{2}^\top A_{2}\bm{X}_i^\top A_{1}^\top A_{1} \bm{X}_{i} \right).
	\end{equation}
	
	To relate the norm minimization to maximum likelihood estimation, consider the complete data log-likelihood function \eqref{eq:14}
	\begin{equation*}
		\ell_{c}(\bm{\Psi}_{1}, \bm{\Psi}_{2})= N\log \left( \begin{vmatrix}\bm{\Psi}_{2}\otimes \bm{\Psi}_{1} \end{vmatrix}\right) -\sum_{i=1}^{N}\frac{1}{W_{i}} \operatorname{tr}\left( \bm{\Psi}_{2}{\bm{X}_{i}}^\top \bm{\Psi}_{1} \bm{X}_{i} \right).
	\end{equation*}
	
	Since $\bm{\Psi}_{2}\otimes \bm{\Psi}_{1}\in \mathcal{M}_{G}$, then $\bm{\Psi}_{2}\otimes \bm{\Psi}_{1}$ can be written as $A^\top A$, where $A\in G \subseteq GL_{pq}$, then 
	
	\begin{equation}
		\label{eq:17}
		\ell_{c}(A^\top A)=N \log \begin{vmatrix}A^\top A \end{vmatrix} -\sum_{i=1}^{N} \frac{1}{W_{i}}	\operatorname{tr} \left( A_{2}^\top A_{2}\bm{X}_i^\top A_{1}^\top A_{1} \bm{X}_{i} \right).
	\end{equation}
	
	Comparing \eqref{eq:16} and \eqref{eq:17}, maximization the log-likelihood
	over $\mathcal{M}_{G}$ is equivalent to minimizing
	\[ -\ell_{c} (A^\top A)=- N \log\begin{vmatrix}A^\top A \end{vmatrix}+\|A\cdot \bm{X}^{W}\|^2 \]
	over $A\in G$.

	The subgroup $G\subseteq GL_{pq}$ is Zariski closed, self-adjoint group, and closed under non-zero scalar multiples. Therefore our results from the previous section is apply to the action of $G_{SL}^+$. So, it is convenient to directly work with the left-right action of $SL_{p}\times SL_{q}$.

	\begin{theorem}
		\label{thm:11}
		Let $\bm{X}^{W} \in (\mathbb{R}^{p\times q})^N$ be as defined in \eqref{eq:7.4}. If the group $G\subseteq GL_{pq}$ is a Zariski closed self adjoint, which is closed under non-zero scalar multiplication, then
		\begin{equation}
			\sup_{A\in G} \left(\ell_{c}(A^\top A) \right) \equiv
			\inf _{\alpha \in {\mathbb{R}}_{>0}} \left( \alpha \left( 	\inf_{B\in SL_{p}\times SL_{q}} \| B\cdot \bm{X}^{W}\|^2  \right)  -pqN\log \alpha \right).
		\end{equation}
		The MLEs, if they exists are the matrices $\alpha B_{2}^\top B_{2}\otimes B_{1}^\top B_{1}$, where $B=(B_{1},B_{2})$ minimizes $\|B\cdot \bm{X}^{W}\|$ under the action of $SL_{p}\times SL_{q}$ on $(\mathbb{R}^{p\times q})^N$ and $\alpha \in {\mathbb{R}}_{>0}$ is the unique value minimizing the outer infimum. 
		
		If $\alpha B_{2}^\top B_{2}\otimes B_{1}^\top B_{1}$, where $B=(B_{1},B_{2})$ is an MLE given $\left(\underline{\bm{X}},\underline{W}\right)$, then all MLEs given $\left(\underline{\bm{X}},\underline{W}\right)$ are of the form $\alpha S_{2}^\top B_{2}^\top B_{2} S_{2} \otimes S_{1}^\top  B_{1}^\top B_{1} S_{1}$, where $(S_{1},S_{2})$ is in the  $SL_{p}\times SL_{q}$ stabilizer of $\bm{X}^{W}$. 
		
		The stability under the left-right action of $SL_{p}\times SL_{q}$ on $(\mathbb{R}^{p \times q})^ N$ is related to maximum likelihood estimation of the matrix variate symmetric Laplace distribution over $\mathcal{M}_{G}$ as follows:
		\renewcommand{\labelenumi}{(\alph{enumi})}
		\begin{enumerate}
			\item $\bm{X}^{W}$ is unstable $\iff$ $\ell_{c}(\bm{\Psi}_{1}, \bm{\Psi}_{2})$ is not bounded from above,
			\item $\bm{X}^{W}$ is semistable $\iff$ $\ell_{c}(\bm{\Psi}_{1},\bm{\Psi}_{2})$ is bounded from above,
			\item $\bm{X}^{W}$ is polystable $\iff$ MLE exists,
			\item  $\bm{X}^{W}$ is stable $\implies$ unique MLE exists.
		\end{enumerate}
	\end{theorem}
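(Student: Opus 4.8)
The plan is to establish the three assertions in turn, each as the $SL_p\times SL_q$ analogue of a statement already proved in the multivariate case, the only genuinely new ingredient being the Kronecker bookkeeping produced by the left--right action.

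First I would reduce the maximization of $\ell_{c}(\bm{\Psi}_1,\bm{\Psi}_2)$ over $\mathcal{M}_G$ to the minimization of
\[f(A)=\|A\cdot\bm{X}^{W}\|^2-N\log\begin{vmatrix}A^\top A\end{vmatrix}\]
over $A\in G$, exactly as in the paragraph preceding the theorem. Writing $A=A_{2}\otimes A_{1}$ and decomposing $A_{1}=\beta_{1}B_{1}$, $A_{2}=\beta_{2}B_{2}$ with $\beta_{i}\in\mathbb{R}_{>0}$ and $(B_{1},B_{2})\in SL_{p}\times SL_{q}$ (signs of the determinants being absorbed via Lemma \ref{lem:4}), two elementary identities do all the work: the Kronecker determinant formula gives $\begin{vmatrix}A_{2}\otimes A_{1}\end{vmatrix}=\begin{vmatrix}A_{2}\end{vmatrix}^{p}\begin{vmatrix}A_{1}\end{vmatrix}^{q}$, whence $\begin{vmatrix}A^\top A\end{vmatrix}=(\beta_{1}\beta_{2})^{2pq}$, while the left--right action scales each summand by $\beta_{1}^{2}\beta_{2}^{2}$, whence $\|A\cdot\bm{X}^{W}\|^2=(\beta_{1}\beta_{2})^2\|B\cdot\bm{X}^{W}\|^2$. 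Setting $\alpha=(\beta_{1}\beta_{2})^2$ turns $f$ into $\alpha\|B\cdot\bm{X}^{W}\|^2-pqN\log\alpha$, which depends on the two scale factors only through $\alpha$. The inner minimum $c=\inf_{B}\|B\cdot\bm{X}^{W}\|^2$ together with the univariate calculus on $\alpha c-pqN\log\alpha$ (whose optimal value $pqN(1-\log(pqN)+\log c)$ is increasing in $c$) then reproduces the claimed double infimum and identifies the MLE as $\alpha\,B_{2}^\top B_{2}\otimes B_{1}^\top B_{1}$, precisely as in Theorem \ref{prop:2}.

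For the description of all MLEs I would transcribe the argument of Theorem \ref{prop:5}, replacing the orthogonal group by the maximal compact $SO_{p}\times SO_{q}$. If $\alpha B_{2}^\top B_{2}\otimes B_{1}^\top B_{1}$ and $\alpha (B'_{2})^\top B'_{2}\otimes (B'_{1})^\top B'_{1}$ are both MLEs, then $B\cdot\bm{X}^{W}$ and $B'\cdot\bm{X}^{W}$ have the same (minimal) norm in the $SL_{p}\times SL_{q}$-orbit, so parts (a),(b) of the Kempf--Ness Theorem \ref{thm:1} supply an orthogonal $O=(O_{1},O_{2})$ with $B'\cdot\bm{X}^{W}=O\cdot(B\cdot\bm{X}^{W})$; hence $S:=B^{-1}O^{-1}B'$ stabilizes $\bm{X}^{W}$ and $B'=OBS$. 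Reading this off componentwise and using $O_{i}^\top O_{i}=\bm{I}$ gives $(B'_{i})^\top B'_{i}=S_{i}^\top B_{i}^\top B_{i}S_{i}$, so every MLE has the stated form $\alpha\,S_{2}^\top B_{2}^\top B_{2}S_{2}\otimes S_{1}^\top B_{1}^\top B_{1}S_{1}$ with $(S_{1},S_{2})$ in the $SL_{p}\times SL_{q}$-stabilizer of $\bm{X}^{W}$.

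Finally, parts (a)--(c) of the stability correspondence follow mechanically from the double infimum as in Theorem \ref{thm:6 }: $c=0$ forces the outer infimum to $-\infty$ and hence $\ell_{c}$ unbounded above (unstable $\Leftrightarrow$ unbounded), $c>0$ makes it a finite real number (semistable $\Leftrightarrow$ bounded), and polystability is exactly the attainment of the inner infimum, which by Kempf--Ness (a),(d) is the closedness of the orbit, i.e. existence of the MLE. The one part demanding care is (d). Assuming $\bm{X}^{W}$ stable I would first reduce, without loss of generality, to the case where $\bm{X}^{W}$ has minimal norm, so that $\alpha\bm{I}=\alpha(\bm{I}_{q}\otimes\bm{I}_{p})$ is one MLE and, by the previous paragraph, every MLE is $\alpha\,S_{2}^\top S_{2}\otimes S_{1}^\top S_{1}$ with $(S_{1},S_{2})\in T_{\bm{X}^{W}}$. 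As in Theorem \ref{thm:6 } the stabilizer of a minimal-norm point is self-adjoint, so $(S_{1}^\top,S_{2}^\top)$ and $(S_{1}^\top S_{1},S_{2}^\top S_{2})$ again lie in $T_{\bm{X}^{W}}$; if either $S_{1}^\top S_{1}\neq\bm{I}$ or $S_{2}^\top S_{2}\neq\bm{I}$, some eigenvalue $\lambda\neq 1$ of a positive definite $S_{i}^\top S_{i}$ produces the distinct powers $(S_{i}^\top S_{i})^{n}=(S_{i}^{n})^\top S_{i}^{n}$, forcing $T_{\bm{X}^{W}}$ to be infinite and contradicting stability. Hence every $(S_{1},S_{2})\in T_{\bm{X}^{W}}$ has $S_{1},S_{2}$ orthogonal, the MLE set $\{\alpha\,S_{2}^\top S_{2}\otimes S_{1}^\top S_{1}\}$ collapses to the single matrix $\alpha\bm{I}$, and the MLE is unique. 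The main obstacle is precisely this step (d): one must simultaneously handle the reduction to $SL_{p}\times SL_{q}$ furnished by Lemma \ref{lem:4}, the self-adjointness of the stabilizer at a minimal-norm point, and the two-component eigenvalue/powers argument, after which the chain ``stable $\Rightarrow$ finite stabilizer $\Rightarrow$ unique MLE'' closes the proof.
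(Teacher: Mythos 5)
Your proposal is correct, but it is architected quite differently from the paper's own proof. The paper's entire proof is a three-line reduction: it observes that $H=\rho\left(SL_{p}\times SL_{q}\right)\subseteq GL_{pq}$ is Zariski closed and self-adjoint, invokes Theorems~\ref{prop:2}, \ref{prop:5} and \ref{thm:6 } for $H$ acting diagonally on $\mathbb{R}^{pqN}$, and then notes that the kernel of $\rho$ restricted to $SL_{p}\times SL_{q}$ is finite, so the stability notions for the left--right action of $SL_{p}\times SL_{q}$ coincide with those for its image $H$; no Kronecker computation appears at all. You instead re-derive each assertion in coordinates: the scaling decomposition $A_{i}=\beta_{i}B_{i}$ with $\alpha=(\beta_{1}\beta_{2})^{2}$, the Kempf--Ness orthogonal-element argument for the set of all MLEs, and the self-adjoint-stabilizer plus eigenvalue-power argument for part (d). The two routes are mathematically equivalent---your computations are what one obtains by unwinding the paper's citations---but each buys something. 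Your version is self-contained and actually verifies the factored form $\alpha\, B_{2}^\top B_{2}\otimes B_{1}^\top B_{1}$ of the MLEs and the componentwise shape $S_{i}^\top B_{i}^\top B_{i}S_{i}$ of the stabilizer conjugation, details the paper's citation-style proof leaves implicit (in particular, that an orthogonal element of $H$ factors, up to a scalar rebalancing, as $O_{2}\otimes O_{1}$ with each $O_{i}$ orthogonal, and that determinant-sign issues are absorbed as in Lemma~\ref{lem:4}); it also proves (d) as a direct chain (stable $\Rightarrow$ orthogonal stabilizer $\Rightarrow$ unique MLE) rather than citing the two-step statement of Theorem~\ref{thm:6 }(d). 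The paper's version is far shorter and isolates the only genuinely new ingredients of the matrix case: that $H$ inherits the standing hypotheses, and that finiteness of the kernel of $\rho$ on $SL_{p}\times SL_{q}$ transfers the stability notions from $H$ back to the product group---a transfer your write-up uses implicitly whenever you apply the Kempf--Ness Theorem~\ref{thm:1} (stated for matrix groups) to pairs $(A_{1},A_{2})$, so it is worth making that bridge explicit if you keep the coordinate-level presentation.
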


	\begin{proof}
		The subgroup $H= \rho \left( SL_{p}\times SL_{q} \right) \subseteq G$ is Zariski closed, self-adjoint. Thus, the Theorems \ref{prop:2} and \ref{prop:5}, and the \hyperref[thm:6 ]{Theorem \ref{thm:6 }} apply to $H$ as well. Furthermore, the kernel of the map $\rho $ with restricted to $SL_{p} \times SL_{q}$, is finite. Hence, for the stability notions $SL_{p} \times SL_{q}$ is equivalent to the image map $H$, so $SL_{p} \times SL_{q}$  can be considered instead of its image $H$.
	\end{proof}
	
	The converse part of \hyperref[thm:11]{Theorem \ref{thm:11}} (d) also holds for the complex matrix Laplace group model, as for the complex Laplace group model given in \hyperref[thm:10]{Theorem \ref{thm:10}}. 
	
	The next example exhibits that the converse part of \hyperref[thm:11]{Theorem \ref{thm:11}} (d) is not true for the real Laplace group model, that is, a unique MLE exists given $\bm{X}^{W}$, but the stabilizer of $\bm{X}^{W}$ is an infinite set.
	
	\begin{example}
		\label{ex:2}
		Let $p=q=N=2$ and the random sample \[\left( \underline{\bm{X}},\underline{W} \right)=\left( \bm{X}_{1}, \bm{X}_{2}, W_{1},W_{2}\right)=\left( \begin{bmatrix}
			1 &0\\0&1
		\end{bmatrix}, \begin{bmatrix}
			1 &-1\\1&1
		\end{bmatrix}, 1, 2 \right).\] 
	Then \[\bm{X}^{W}=\left( \frac{1}{\sqrt{W_{1}}}\bm{X}_{1},  \frac{1}{\sqrt{W_{2}}}\bm{X}_{2} \right)\in (\mathbb{R}^{2\times2})^{2} ,\]
		with
		\[\frac{1}{\sqrt{W_{1}}} \bm{X}_{1}=\begin{bmatrix}
			1 &0\\0&1
		\end{bmatrix}, \frac{1}{\sqrt{W_{2}}} \bm{X}_{2}=\frac{1}{\sqrt{2}} \begin{bmatrix}
			1 &-1\\1&1
		\end{bmatrix}.\] 
		
		We will show that the MLE given $\bm{X}^{W}$ is unique, but the stabilizer of $\bm{X}^{W}$ is an infinite set. First we will show that $\bm{X}^{W}$ is polystable under the left-right action of $SL_{2}\times SL_{2}$. Any matrix in $SL_{2}$ has Frobenius norm at least $\sqrt{2}$, and $\|\frac{1}{\sqrt{W_{1}}}\bm{X}_{1} \|^{2}=2$ and $\|\frac{1}{\sqrt{W_{2}}}\bm{X}_{2} \|^{2}=2$, therefore $\bm{X}^{W}=\left( \frac{1}{\sqrt{W_{1}}}\bm{X}_{1},  \frac{1}{\sqrt{W_{2}}}\bm{X}_{2} \right)$ has minimal norm in its orbit. Then, by (a) and (d) of the Kempf-Ness \hyperref[thm:1]{Theorem \ref{thm:1}}, $\bm{X}^{W}$ is polystable. The stabilizer of $\bm{X}^{W}$ is $T_{\bm{X}^{W}}$, which is the set \[\biggl\{ (S_{1},S_{2})\in SL_{2}\times SL_{2} \bigg| \, \frac{1}{\sqrt{W_{1}}}S_{1}\bm{X}_{1}S_{2}^\top =\frac{1}{\sqrt{W_{1}}} \bm{X}_{1}, \frac{1}{ \sqrt{W_{2}}}S_{1}\bm{X}_{2}S_{2}^\top =\frac{1}{\sqrt{W_{2}}}\bm{X}_{2}\biggr\}.\] 
		
		Now,  $\frac{1}{\sqrt{W_{1}}}S_{1}\bm{X}_{1}S_{2}^\top =\frac{1}{\sqrt{W_{1}}} \bm{X}_{1}\implies S_{1}S_{2}^\top =\bm{I} \implies S_{2}^\top =S_{1}^{-1}$, and $\frac{1}{ \sqrt{W_{2}}}S_{1}\bm{X}_{2}S_{2}^\top =\frac{1}{\sqrt{W_{2}}}\bm{X}_{2} \implies S_{1}\bm{X}_{2}S_{1}^{-1} =\bm{X}_{2}\implies S_{1}\bm{X}_{2}= \bm{X}_{2} S_{1}$, after solving this, we get
		\[S_{1}=\begin{bmatrix}
			a&b\\-b&a
		\end{bmatrix}, a^{2}+b^{2}=1,\]
		that is, $S_{1}$ is an orthogonal matrix, and $S_{2}=(S_{1}^{-1} )^\top=S_{1}$. Hence, the stabilizer   $T_{\bm{X}^{W}}= \{ (S_{1},S_{1}) \mid S_{1}\in SO_{2}\}$, where $SO_{2}$ is the set of orthogonal matrices in $SL_{2}$.
		
		Since $\bm{X}^{W}$ has minimal norm in its orbit, then by \hyperref[thm:11]{Theorem \ref{thm:11}}, $\alpha \bm{I}\otimes \bm{I}$ is an MLE given $\bm{X}^{W}$. All the MLEs given $\bm{X}^{W}$ are $\alpha S_{2}^\top \bm{I} S_{2} \otimes S_{1}^\top \bm{I} S_{1}$, where $(S_{1},S_{2})\in T_{\bm{X}^{W}}$. Therefore, the MLE $\alpha \bm{I} \otimes \bm{I}$ given $\bm{X}^{W}$ is unique. But the stabilizer $T_{\bm{X}^{W}}$ is an infinite set, that is, $\bm{X}^{W}$ is not stable. 
		
		For complex matrix Laplace group model, $T_{\bm{X}^{W}}= \{ (S_{1},S_{1})\mid S_{1}\in SO_{2}(\mathbb{C})\}$, and from \hyperref[prop:9]{Theorem \ref{prop:9}}, all the MLEs given $\bm{X}^{W}$ are $\alpha S_{2}^{*} S_{2} \otimes S_{1}^{*}S_{1}$, where $(S_{1},S_{2}) \in T_{\bm{X}^{W}}$. Hence, infinitely many MLEs exists.
	\end{example}
	
	In the following example, we will demonstrate all the stability conditions of data with respect to the left-right action of $SL_{2}\times SL_{2}$ on $(\mathbb{R}^{2\times 2})^{N}$ that may arise.
	
	\begin{example}
		Let $p=q=2$ and we will show the stability of data under the left-right action of $SL_{2}\times SL_{2}$ on $(\mathbb{R}^{2\times 2})^{N}$. Consider the random sample
		\[ \bm{X}_{1}=\begin{bmatrix}
			0 &0\\2&0
		\end{bmatrix}, \bm{X}_{2}= \begin{bmatrix}
			1 &0\\0&1
		\end{bmatrix},\bm{X}_{3}= \begin{bmatrix}
			1 &-1\\1&1
		\end{bmatrix}, \bm{X}_{4}=\begin{bmatrix}
			\sqrt{3}&0\\0&-\sqrt{3}
		\end{bmatrix},\]
		and $W_{1}=4,W_{2}=1,W_{3}=2,W_{4}=3$. Then,
		\[\frac{1}{\sqrt{W_{1}}} \bm{X}_{1}=\begin{bmatrix}
			0 &0\\1&0
		\end{bmatrix}, \frac{1}{\sqrt{W_{2}}} \bm{X}_{2}= \begin{bmatrix}
			1 &0\\0&1
		\end{bmatrix},\]
		\[\frac{1}{\sqrt{W_{3}}} \bm{X}_{3}=\frac{1}{\sqrt{2}} \begin{bmatrix}
			1 &-1\\1&1
		\end{bmatrix}, \frac{1}{\sqrt{W_{4}}} \bm{X}_{4}= \begin{bmatrix}
			1 &0\\0&-1
		\end{bmatrix}.\]
		
		\renewcommand{\labelenumi}{(\alph{enumi})}
		\begin{enumerate}
			\item Consider $\bm{X}^{W}=\frac{1}{\sqrt{W_{1}}}\bm{X}_{1}$, and 
			\[A_{1}=\begin{bmatrix}
				t^{-1} &0\\0&t
			\end{bmatrix} \in SL_{2}, \bm{I}\in SL_{2}.\]
			Then, $(A_{1},\bm{I}) \cdot \bm{X}^{W}= A_{1} \frac{1}{\sqrt{W_{1}}}\bm{X}_{1} \bm{I}^\top = \begin{bmatrix}
				0 &0\\t&0
			\end{bmatrix},$ then, $\|(A_{1},\bm{I}) \cdot \bm{X}^{W}\|=0$, as $t$ tends to $0$, that is, $0$ is in the orbit closure of $\bm{X}^{W}$. Hence, $\bm{X}^{W}=\frac{1}{\sqrt{W_{1}}}\bm{X}_{1}$ is unstable.
			
			\item Consider $\bm{X}^{W}=\left( \frac{1}{\sqrt{W_{1}}}\bm{X}_{1}, \frac{1}{\sqrt{W_{2}}}\bm{X}_{2} \right)$, and the orbit of $\bm{X}^{W}$ is contained in the set $\{ (S, B)\mid S\ne \bm{0}, B\in SL_{2}\}$, for any $(S,B)$ is in the orbit of $\bm{X}^{W}$, where $S\ne \bm{0}$ and $B\in SL_{2}$
			\[ \|(S,B)\|^{2}=\|S\|^{2}+\|B\|^{2}\ge \|S\|^{2}+2>2,\]
			that is, $\bm{X}^{W}$ is semistable. Let 
			\[A_{1}=\begin{bmatrix}
				t^{-1} &0\\0&t
			\end{bmatrix} , 
			A_{2}=\begin{bmatrix}
				t &0\\0&t^{-1}
			\end{bmatrix}, \] then $ (A_{1},A_{2})\cdot \bm{X}^{W}=	\left( A_{1} \frac{1}{\sqrt{W_{1}}} \bm{X}_{1}A_{2}^\top, A_{1} \frac{1}{\sqrt{W_{2}}} \bm{X}_{2}A_{2}^\top \right)=(\bm{0},\bm{I}),$ as $t$ tends to $0$, and the norm $\| (\bm{0},\bm{I})\|^{2}=2$, which is minimal norm but does not attained by any element in the orbit of $\bm{X}^{W}$. Hence, $\bm{X}^{W}=\left( \frac{1}{\sqrt{W_{1}}}\bm{X}_{1}, \frac{1}{\sqrt{W_{2}}}\bm{X}_{2} \right)$ is not polystable.
			
			\item The data $\bm{X}^{W}=\frac{1}{\sqrt{W_{2}}}\bm{X}_{2}=\bm{I}$ is of minimal norm, so by the Kempf-Ness \hyperref[thm:1]{Theorem \ref{thm:1}} (a) and (d), $\bm{X}^{W}$ is polystable. Thus by \hyperref[thm:11]{Theorem \ref{thm:11}}, $\alpha \bm{I} \otimes \bm{I}$ is an MLE given $\bm{X}^{W}$. The stabilizer of $\bm{X}^{W}$ is
			\begin{align*}
				T_{\bm{X}^{W}}&=\{(A_{1},A_{2})\in SL_{2} \times SL_{2}\mid (A_{1}, A_{2})\cdot \bm{X}^{W}=\bm{X}^{W}\}\\
				&=\{ (A_{1},A_{2})\in SL_{2} \times SL_{2}\mid A_{1} \bm{I} A_{2}^\top =\bm{I}\}\\
				&=\{(A_{1},A_{2})\in SL_{2} \times SL_{2}\mid A_{2}=A_{1}^{-\top}\}\\
				&=\{(A,A^{-\top})\mid A\in SL_{2}\}.
			\end{align*} 
			So, the stabilizer of $\bm{X}^{W}$ is an infinite set, that is, $\bm{X}^{W}$ is not stable. There are infinitely many MLEs exists, and all the MLEs given $\bm{X}^{W}$ are of the form $\alpha A^\top A \otimes A^{-1} A^{-\top}$ for $A\in SL_{2}$.
			
			\item Consider $\bm{X}^{W}=\left( \frac{1}{\sqrt{W_{2}}}\bm{X}_{2}, \frac{1}{\sqrt{W_{3}}}\bm{X}_{3}, \frac{1}{\sqrt{W_{4}}}\bm{X}_{4} \right)$. The orbit of $\bm{X}^{W}$ is contained in the set $\{(S_{1},S_{2},S_{3}) \mid S_{1},S_{2} \in SL_{2}, \det(S_{3})=-1 \}$. $\bm{X}^{W}$ is of minimal norm in its orbit using the same argument of \hyperref[ex:2]{Example \ref{ex:2}}. Hence, by the Kempf-Ness \hyperref[thm:1]{Theorem \ref{thm:1}} (a) and (d), $\bm{X}^{W}$ is polystable. As discussed in \hyperref[ex:2]{Example \ref{ex:2}}, the stabilizer of $\bm{X}^{W}$ is contained in the set $\{(S_{1},S_{1}) \mid S_{1}\in SO_{2}\}$, where $SO_{2}$ is the set of orthogonal matrices in $SL_{2}$ and  the matrix $S_{1}$ is of the form  
			\[ S_{1}=\begin{bmatrix}
				a&b\\-b&a
			\end{bmatrix}, a^{2}+b^{2}=1.\] Now, $S_{1}\frac{1}{\sqrt{W_{4}}} \bm{X}_{4} S_{1}^\top=\frac{1}{\sqrt{W_{4}}} \bm{X}_{4} \implies S_{1}\frac{1}{\sqrt{W_{4}}} \bm{X}_{4}= \frac{1}{\sqrt{W_{4}}} \bm{X}_{4}S_{1}^{-\top} \implies S_{1}\frac{1}{\sqrt{W_{4}}} \bm{X}_{4}= \frac{1}{\sqrt{W_{4}}} \bm{X}_{4}S_{1}$. After solving this, we get $b=0$ and $a^{2}=1$, that is, $S_{1}=\pm \bm{I}$, and the stabilizer of $\bm{X}^{W}$ is $T_{\bm{X}^{W}}= \{(\bm{I},\bm{I}), (-\bm{I},-\bm{I})\}$, a finite set. Hence, $\bm{X}^{W}$ is stable.
		\end{enumerate}
	\end{example}

	\section{Conclusion}
	In this paper, the boundedness of the likelihood function, as well as the existence and uniqueness of maximum likelihood estimators of the parameters of the multivariate and matrix variate symmetric Laplace distributions, are discussed. It is important to note that these distributions belong to the non-exponential family of distributions. To achieve this, we have utilized the representations of the multivariate and matrix variate symmetric Laplace random variables as given in \hyperref[thm:5.1]{Theorem \ref{thm:5.1}} and \hyperref[theorem:5.3]{Theorem \ref{theorem:5.3}}. We demonstrate that the MLE can be derived from the joint distribution established in  \hyperref[lemma:5.2]{Lemma \ref{lemma:5.2}} and \hyperref[lemma:6.3]{Lemma \ref{lemma:6.3}}. 
	
	The boundedness of the likelihood function and the existence and uniqueness of the MLEs of the parameters in the Laplace group model and the matrix Laplace model have been achieved through group actions, where the Laplace group model is defined in \hyperref[def:7.1]{Definition \ref{def:7.1}} and the matrix Laplace model is a submodel of the Laplace group model. We have shown that the maximizing the complete data log likelihood functions of these models are equivalent to minimizing the norm over the group $G$ through the group action. The study of the group action on a set or vector space focuses on identifying the properties that remain unchanged. The key concept of studying the group actions is the orbit of a point, which consists of all the points obtained by acting the group elements on that point. The stability notions of the data are studied using the minimum norm along its orbits. We have connected the stability of data with respect to the group action to the likelihood properties of the multivariate and matrix variate symmetric Laplace distributions, and hence provided the results for the existence and uniqueness of the MLEs.

\end{document}